%% file: main.tex
\documentclass[preprint,11pt,authoryear]{elsarticle}

\usepackage{setspace}
\usepackage{amsmath,amssymb,mathtools}
\usepackage{booktabs,array,multirow,tabularx}
\usepackage{graphicx}
\usepackage{placeins}
\usepackage{url}
\usepackage[hidelinks]{hyperref}
\usepackage{caption}

\usepackage{enumitem}
\usepackage{xcolor}
\usepackage{microtype}

\usepackage{float}
\usepackage{arydshln}

\allowdisplaybreaks[4]
\usepackage{algorithm}
\usepackage{algorithmic}
\usepackage{amsthm}
\newtheorem{proposition}{Proposition}
\usepackage[margin=1in]{geometry}

\makeatletter
\newenvironment{breakablealgorithm}
{%
    \par\addvspace{\intextsep}
    \refstepcounter{algorithm}%
    \hrule height .8pt depth 0pt
    \kern 2pt
    \renewcommand{\caption}[2][\relax]{%
        {\raggedright
        \textbf{\ALG@name~\thealgorithm:} ##2\par}%
        \ifx\relax##1\relax
            \addcontentsline{loa}{algorithm}
            {\protect\numberline{\thealgorithm}##2}%
        \else
            \addcontentsline{loa}{algorithm}
            {\protect\numberline{\thealgorithm}##1}%
        \fi
        \kern 2pt
        \hrule
        \kern 2pt
    }%
}
{%
    \kern 2pt
    \hrule
    \par\addvspace{\intextsep}
}
\makeatother

\newcommand{\Smain}{\ensuremath{\mathcal{S}^{\mathrm{main}}}}

\newcommand{\Spercentile}{\ensuremath{\mathcal{S}^{\mathrm{perc}}}}

\begin{document}

\begin{frontmatter}

\title{Multi-Objective Planning for Healthcare Facility Resilience: Mitigate Now or Respond Later?}

\author[ut]{Gizem Toplu-Tutay\corref{cor1}\fnref{present}}
\ead{gizem@utexas.edu}
\author[ut]{John J. Hasenbein}
\author[bbr]{Matt Kammer-Kerwick}
\author[ut]{Erhan Kutanoglu}

\cortext[cor1]{Corresponding author.}
\fntext[present]{Present address: The Earth Commons Institute,
Georgetown University, Washington, DC, United States.}
\affiliation[ut]{organization={Operations Research and Industrial Engineering, The University of Texas at Austin},
            addressline={204 E. Dean Keeton Street},
            city={Austin},
            state={Texas},
            postcode={78712},
            country={United States}}
            
\affiliation[bbr]{
    organization={Bureau of Business Research, IC² Institute,
    The University of Texas at Austin},
    addressline={2815 San Gabriel Street},
    city={Austin},
    state={Texas},
    postcode={78705},
    country={United States}
}

\begin{abstract}
Flooding can damage healthcare facilities, interrupt local care capacity, and trigger costly patient evacuations. Addressing these risks requires long-term resilience investments under uncertainty. We develop a bi-objective two-stage stochastic optimization model that jointly determines permanent facility hardening and scenario-dependent evacuation decisions. The first objective minimizes expected evacuation, physical damage, and business-interruption costs. The second minimizes a service-disruption impact index that combines the duration and scale of service loss with a place-based social-vulnerability weight. We develop an exact Benders decomposition and a scalable Lagrangian-dual method with tailored primal recovery, embedding both within an adaptive procedure for constructing informative Pareto frontiers. A case study of 3,752 hospitals and nursing homes in Texas evaluates the framework under climate-informed tropical-cyclone flood scenarios. The results show that minimizing economic losses alone can systematically allocate less protection to facilities located in socially vulnerable areas, while moderate movement along the Pareto frontier can substantially reduce disruption impacts at limited additional expected cost. The exact decomposition solves all tested instances, including larger stress tests for which the extensive formulation becomes memory-limited, while the Lagrangian method provides substantially faster high-quality solutions.
\end{abstract}

\begin{keyword}
OR in strategic planning \sep Resilience \sep Two-stage stochastic programming \sep Multi-objective optimization \sep Decomposition
\end{keyword}

\end{frontmatter}

\input{01_introduction}
\input{02_model}

\input{03_solution_approaches}
\input{04_case_study}
\input{05_results}

\input{06_conclusion}



\section*{Funding}
This work was supported by the Bureau of Business Research (BBR) at the IC$^2$ Institute, The University of Texas at Austin.

\section*{Data availability}
Data and code are archived on Zenodo at
\url{https://doi.org/10.5281/zenodo.21510695}.

\clearpage
\appendix

\renewcommand{\thesection}{S\arabic{section}}
\renewcommand{\thefigure}{S\arabic{figure}}
\renewcommand{\thetable}{S\arabic{table}}
\renewcommand{\theequation}{S\arabic{equation}}

\setcounter{section}{0}
\setcounter{figure}{0}
\setcounter{table}{0}
\setcounter{equation}{0}

\input{7_supp}

\bibliographystyle{elsarticle-harv}
\bibliography{references}

\end{document}

%% file: 01_introduction.tex
\section{Introduction}
\label{sec:introduction}

Public agencies responsible for disaster preparedness and mitigation must
allocate limited resilience funds before the location and severity of future
disruptions are known. Climate-related hazards increasingly threaten lifeline
systems, generating substantial economic losses and uneven burdens across
communities
\citep{noaa_national_centers_for_environmental_information_ncei,
hallegatte_poverty_2020}. Resilience planning therefore poses a strategic
capital-allocation problem: which assets should be protected, to what extent,
and under uncertain future hazard conditions?

This problem is particularly consequential for healthcare facilities. Flooding can
damage hospitals and nursing homes, interrupt local care capacity, and trigger
patient evacuations and prolonged business interruption
\citep{valente_impacts_2026,nhc_harvey_2018}. The consequences of lost access
are also unevenly distributed. Hospital closures and increased travel distance
are associated with adverse outcomes, particularly in rural and underserved
communities with few nearby alternatives
\citep{buchmueller_how_2006,khushalani_impact_2023}. Tropical cyclones can
simultaneously increase healthcare demand and restrict access, with socially
vulnerable communities facing greater barriers to obtaining care
\citep{parks_tropical_2021,rickless_social_2023}. Plans based only on aggregate
monetary losses may therefore understate the distributional consequences of
healthcare service disruptions.

Research on disaster operations has examined disaster preparedness~\citep{uichanco_model_2022}, patient evacuation~\citep{KIM2024104518}, and post-disaster recovery~\citep{castro_markov_2025}. However, these models primarily focus on short-term operational decisions once a disruption is imminent or has already occurred, while long-term mitigation planning, particularly in the context of flood events, remains comparatively underexplored~\citep{gupta_disaster_2016}. Related work in other lifeline infrastructures, including power grids and
transportation systems, has increasingly addressed long-term resilience
planning under flood uncertainty
\citep{TOPLUTUTAY2024102036,shukla_co-optimization_2025,
austgen_comparisons_2025,phouratsamay_strategic_2024}. More broadly, strategic mitigation under
uncertain disruptions has been studied through inventory positioning,
facility protection, recovery, and recourse decisions
\citep{gao_disruption_2019,losada_optimizing_2012,
starita_improving_2022,ansari_two-stage_2025}.

Healthcare-fortification studies have considered probabilistic facility
failures, accessibility, investment portfolios, and adversarial interdiction
\citep{liberatore_hedging_2012,aliakbarian_bi-level_2015, toplu-tutay_scenario-based_2022, khanduzi_bilevel_2024,baret_enhancing_2025,baret_optimization_2026}. Nevertheless, prior work has not jointly modeled climate-informed and
spatially dependent flood exposure, permanent facility hardening,
endogenous patient evacuation, and the trade-off between economic loss and service disruption impact. Although climate uncertainty has been incorporated into long-term flood-protection planning
\citep{pol_optimal_2014,postek_adjustable_2019,
phouratsamay_strategic_2024}, its integration with healthcare facility
protection and distributional disruption impacts remains limited.

We address this gap using a bi-objective two-stage stochastic mixed-integer
program with climate-informed tropical-cyclone flood scenarios. The first
stage selects discrete hardening levels subject to a capital budget. After a
scenario is realized, the recourse problem identifies disrupted facilities and
assigns their patients to safe receivers with available capacity. The two
objectives minimize expected economic loss and service disruption impact, respectively.

Even the single-objective problem of minimizing expected economic loss is
NP-hard. The full bi-objective formulation compounds this computational
challenge through scenario-dependent facility states and dense capacitated
evacuation networks. To solve the resulting large-scale problem, we develop an
exact Benders decomposition and a scalable Lagrangian-dual method with tailored
primal recovery. We embed both methods in an adaptive
$\varepsilon$-constraint procedure that concentrates computational effort on
informative regions of the Pareto frontier.

This study makes four contributions. First, it integrates proactive facility
hardening and scenario-dependent patient evacuation in a long-term healthcare facility resilience model. Second, it introduces a service-disruption impact measure that combines the duration and scale of lost healthcare capacity with a social-vulnerability weight. Third, it combines a real-world dataset of 3,752 Texas hospitals and nursing homes with realistic climate-informed future flood scenarios. Fourth, it develops exact and approximate decomposition methods for constructing the Pareto frontier of the
large-scale bi-objective stochastic program.

%% file: 02_model.tex
\section{Problem description and mathematical formulation}
\label{sec:model}

The stochastic optimization model is formulated from the perspective of a single central decision-maker (e.g., a public agency or regional authority) who jointly determines mitigation investments and allocates disaster relief funds following flood events. Hardening is installed before the flood scenario is observed and represents a permanent mitigation measure such as a perimeter floodwall. After scenario $s\in\mathcal{S}$ is realized, facilities whose flood depth exceeds their hardening level become non-operational and their occupied beds must be evacuated to safe facilities with available capacity. The second stage therefore determines operational responses, including patient
evacuation, and the resulting economic and disruption impacts. Table~\ref{tbl:Notation_healthcaresys} summarizes the central notation.

\begin{table}[!t]
\centering
\caption{Sets, parameters, and decision variables}
\label{tbl:Notation_healthcaresys}

\footnotesize
\setstretch{1.0}
\setlength{\tabcolsep}{4pt}
\renewcommand{\arraystretch}{1.05}

\begin{tabularx}{\textwidth}{
    @{}
    >{\raggedright\arraybackslash}p{0.22\textwidth}
    >{\raggedright\arraybackslash}X
    @{}
}
\toprule
Symbol & Description \\
\midrule

\multicolumn{2}{@{}l}{\textit{Sets}} \\
\addlinespace[2pt]

$J$
& Set of healthcare facilities (hospitals and nursing homes). \\

$\mathcal{S}$
& Set of flooding scenarios. \\

$J^{\text{send}}$
& Subset of facilities that experience positive flood depth in at least one
scenario:
$\left\{j\in J \mid \exists s\in\mathcal{S}\text{ such that }F_{sj}>0\right\}$. \\

$J^{\text{recv}}_{s}$
& Subset of facilities that can safely receive evacuees in scenario $s$:
$\left\{k\in J \mid F_{sk}=0,\;A_k>0\right\}$. \\

$\mathcal{A}_s$
& Scenario-dependent evacuation arc set:
$\left\{(j,k)\in J^{\text{send}}\times J^{\text{recv}}_s \mid k\neq j\right\}$. \\
\midrule

\addlinespace[4pt]
\multicolumn{2}{@{}l}{\textit{Parameters}} \\
\addlinespace[2pt]

$C_j^{H}$
& Cost of permanent hardening per level of hardening for facility $j$. \\

$C_{jk}^{E}$
& Evacuation cost per patient from facility $j$ to facility $k$. \\

$R_j$
& Restoration and business-interruption cost per unit of excess flood depth
at facility $j$. \\

$V_j$
& Service disruption impact per unit of excess flood depth at facility $j$. \\

$O_j,\;D_j$
& Bed capacity and potential evacuation demand (occupied beds) at facility
$j$, respectively. \\

$A_j=O_j-D_j$
& Available bed capacity at facility $j$. \\

$P_s$
& Probability of scenario $s$. \\

$B$
& Total permanent hardening budget. \\

$F_{sj}$
& Flood level at facility $j$ in scenario $s$, rounded up to the nearest
integer. \\

$H_j^{\max}$
& Maximum allowable hardening level at facility $j$, defined as the maximum
of $F_{sj}$ across all scenarios $s$. \\
\midrule
\addlinespace[4pt]
\multicolumn{2}{@{}l}{\textit{Decision variables}} \\
\addlinespace[2pt]

$y_j\in\{0,\ldots,H_j^{\max}\}$
& Discrete hardening level at facility $j$,
$j\in J^{\text{send}}$. \\

$q_{sjk}\in\mathbb{Z}_{+}$
& Number of patients evacuated from facility $j$ to receiving facility $k$
in scenario $s$, defined for $(j,k)\in\mathcal{A}_s$ and
$s\in\mathcal{S}$. \\

$\gamma_{sj}\in\{0,1\}$
& Equal to 1 if facility $j$ is flooded and non-operational in scenario $s$,
and 0 otherwise, for $j\in J^{\text{send}}$ and $s\in\mathcal{S}$. \\

$z_{sj}\geq 0$
& Excess flood depth above the hardening level at facility $j$ in scenario
$s$, for $j\in J^{\text{send}}$ and $s\in\mathcal{S}$. \\

\bottomrule
\end{tabularx}
\end{table}

\subsection{Objectives}
The expected economic-loss objective includes evacuation expenses and post-flood restoration and business interruption costs:
\begin{align}
 f_1(y,z,q)
 = \sum_{s\in\mathcal{S}} P_s
 \left(
 \sum_{(j,k)\in\mathcal{A}_s} C_{jk}^E q_{sjk}
 + \sum_{j\in J^{\mathrm{send}}} R_j z_{sj}
 \right).
 \label{eq:f1}
\end{align}

Monetary losses alone, however, do not capture how the consequences of
lost healthcare capacity are distributed across communities. Hospital closures
and the resulting increases in travel distance can reduce access to care and
adversely affect health outcomes
\citep{buchmueller_how_2006,khushalani_impact_2023}, while communities with
greater social vulnerability face additional barriers to healthcare access
\citep{alrifai_state-level_2022,rickless_social_2023}.
More generally, empirical
evidence indicates that infrastructure-service disruptions can produce
heterogeneous well-being impacts across sociodemographic groups and motivates
human-centric resilience measures that account for both disruption exposure
and population vulnerability \citep{dargin_human-centric_2020}. Because patient-origin, facility-service-area, and patient-level outcome data
are unavailable, we represent the distributional dimension of service loss
using an expected, location-based, vulnerability-weighted service-disruption impact index:
\begin{align}
 f_2(y,z)
 = \sum_{s\in\mathcal{S}}P_s
 \sum_{j\in J^{\mathrm{send}}}V_j z_{sj}.
 \label{eq:f2}
\end{align}
We define $V_j=I_j\alpha^{t(j)}\varphi_j$, where $I_j$ is a
place-based social-vulnerability weight corresponding to the census tract
containing facility $j$, $\alpha^{t(j)}$ is restoration time per unit excess depth for facility type $t(j)$, and $\varphi_j$ represents facility scale.  Thus, $V_jz_{sj}$ captures the duration and scale of facility $j$'s service disruption in scenario $s$, while assigning greater weight to facilities located in more socially vulnerable areas.

For each fixed budget, we apply the $\varepsilon$-constraint method
\citep{ehrgott2005multicriteria,cohon1978multiobjective,
mavrotas2009effective}:
\begin{align}
 \nonumber \min \quad & f_1(y,z,q) \\
 \text{s.t.}\quad & f_2(y,z)\leq \varepsilon.
 \label{eq:epsilon}
\end{align}

For each fixed budget level, the admissible interval for $\varepsilon$ is
defined using lexicographically refined payoff-table endpoints. We first
minimize $f_1$ and then, among all solutions attaining $f_1^{\min}$,
minimize $f_2$. This yields the Pareto-efficient endpoint
$(f_1^{\min},f_2^{\max})$. Analogously, we first minimize $f_2$ and then,
among all solutions attaining $f_2^{\min}$, minimize $f_1$, yielding
$(f_1^{\max},f_2^{\min})$. The resulting interval
$[f_2^{\min},f_2^{\max}]$ excludes weakly efficient endpoints and defines
the range over which the trade-off between economic loss and service
disruption impact is evaluated. A schematic illustration of the lexicographic refinement is provided in
Figure~\ref{fig:conceptual_payoff_endpoints}.
\begin{figure}[H]
    \centering
    \includegraphics[width=0.70\textwidth]{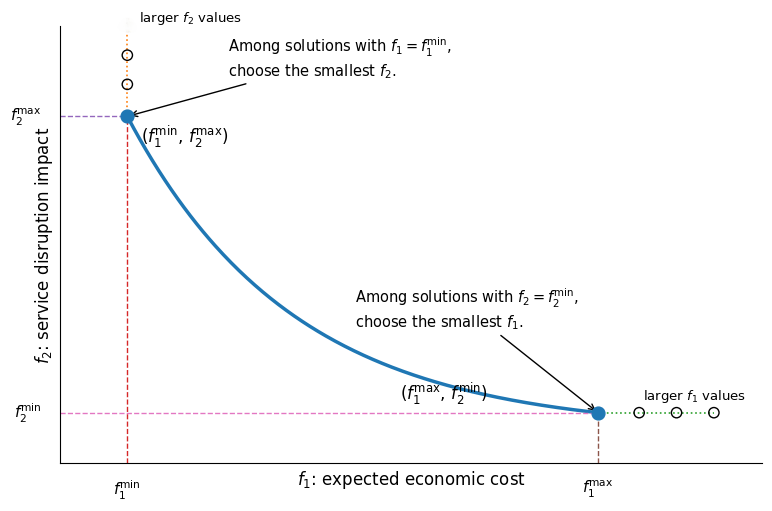}
    \caption{Conceptual illustration of the payoff-table endpoints and lexicographic tie-breaking. The open circles represent alternative end point solutions. The figure is schematic and not drawn to scale.}
    \label{fig:conceptual_payoff_endpoints}
\end{figure}

\subsection{Feasibility constraints}

Hardening expenditures cannot exceed the available budget:
\begin{align}
 \sum_{j\in J^{\mathrm{send}}} C_j^H y_j \leq B,
 \label{eq:budget}
\end{align}
where $B$ is an exogenous policy parameter reflecting funding limits imposed by public agencies or grant programs.

Excess flood depth and operational status satisfy
\begin{align*}
 z_{sj} &\geq F_{sj}-y_j,
 &&\forall s\in\mathcal{S},\ j\in J^{\mathrm{send}}, \\
 z_{sj} &\leq F_{sj}\gamma_{sj},
 &&\forall s\in\mathcal{S},\ j\in J^{\mathrm{send}}, \\
 z_{sj} &\geq 0,
 &&\forall s\in\mathcal{S},\ j\in J^{\mathrm{send}}.
\end{align*}
At optimality, these constraints imply $z_{sj}=(F_{sj}-y_j)_+$ and $\gamma_{sj}=\mathbf{1}[F_{sj}>y_j]$.

If a facility is non-operational, all occupied beds must be evacuated:
\begin{align*}
 \sum_{k:(j,k)\in\mathcal{A}_s} q_{sjk}
 =D_j\gamma_{sj},
 \quad \forall s\in\mathcal{S},\ j\in J^{\mathrm{send}}.
\end{align*}
Receiving capacities impose
\begin{align}
 \sum_{j:(j,k)\in\mathcal{A}_s}q_{sjk}
 \leq A_k,
 \quad \forall s\in\mathcal{S},\ k\in J_s^{\mathrm{recv}}.
 \label{eq:recv_capacity}
\end{align}
The decision domains are $y_j\in\{0,\ldots,H_j^{\max}\}$, $\gamma_{sj}\in\{0,1\}$, and $q_{sjk}\geq0$. To ensure complete recourse, we include a dummy destination with large capacity and prohibitive transportation cost, guaranteeing feasibility under all scenarios. This dummy node reflects the practical reality that patients can always be evacuated farther away at a high logistical and economic cost.

\subsection{Computational Complexity}
\label{sec:complexity}

The computational challenge of the introduced problem does not arise solely from
its multi-objective structure. Even the single-objective problem that minimizes
expected economic loss, \(f_1\), is NP-hard.

\begin{proposition}
The single-objective \(f_1\) version of the healthcare facility resilience planning problem is NP-hard.
\end{proposition}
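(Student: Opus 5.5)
We prove NP-hardness by a polynomial reduction from the 0--1 knapsack problem, whose decision version is NP-complete. Given knapsack items $i=1,\dots,n$ with integer weights $w_i>0$, values $v_i>0$, capacity $W$ and target $T$, we build an instance with a single scenario ($|\mathcal{S}|=1$, $P_1=1$) and one sending facility $j_i$ per item. We set $F_{1j_i}=1$, so $H_{j_i}^{\max}=1$ and $y_{j_i}\in\{0,1\}$. We set $C_{j_i}^H=w_i$ and $B=W$, so the budget constraint~\eqref{eq:budget} is exactly the knapsack constraint. To make evacuation irrelevant, we set $D_{j_i}=0$ (or alternatively $D_{j_i}=1$ with $C^E$ folded into the value; the zero-demand choice is simplest). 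We add one receiving facility $k$ with $F_{1k}=0$ and $A_k>0$, so that $J^{\mathrm{recv}}_1$ is well defined, and we include the dummy destination. Finally we set $R_{j_i}=v_i$. All data are integral and the instance has size polynomial in the knapsack input.

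Next we verify the objective correspondence. By the remark following the excess-depth constraints, at optimality $z_{1j_i}=(1-y_{j_i})_+=1-y_{j_i}$ and $\gamma_{1j_i}=\mathbf{1}[y_{j_i}=0]$. With $D_{j_i}=0$, the evacuation constraints force $q=0$ at zero cost. Hence the optimal value equals
\[
\min_{y\in\{0,1\}^n:\ \sum_i w_i y_i\le W}\ \sum_i v_i(1-y_i) \;=\; \sum_i v_i-\max\Big\{\sum_i v_iy_i:\ \sum_i w_iy_i\le W,\ y\in\{0,1\}^n\Big\}.
\]
The knapsack instance therefore has a solution of value at least $T$ if and only if the constructed instance has optimal $f_1$ at most $\sum_i v_i-T$. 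Consequently a polynomial algorithm for the $f_1$ problem would decide knapsack, which proves the proposition.

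The only delicate step is making sure the reduction respects the model's definitional conventions rather than any real combinatorial difficulty. Zero demand still leaves $j_i\in J^{\mathrm{send}}$, since membership depends only on $F_{1j_i}>0$, and the arc set $\mathcal{A}_1$ is nonempty through $k$. If one insists on $D_j>0$, we take $D_{j_i}=1$, give $k$ capacity $A_k\ge n$, and set $C^E_{j_ik}=c_i$ with $R_{j_i}=v_i-c_i$. This preserves the per-item loss $v_i$ and the same equivalence. Because knapsack is only weakly NP-hard, the claim is NP-hardness in the ordinary sense. If strong hardness were desired, I would instead reduce from a set-cover-type problem using the capacitated evacuation structure, but that is not needed for the statement.
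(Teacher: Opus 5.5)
Your proposal is correct and uses essentially the same reduction as the paper: a single scenario with $F_{sj}=H_j^{\max}=1$, $C_j^H=w_j$, $B=W$ and one ample-capacity safe receiver, so the problem collapses to minimizing $\sum_j(\text{per-item loss})(1-y_j)$ under the knapsack constraint. The paper uses exactly your alternative variant ($D_j=1$, $A_k\ge|J^{\mathrm{send}}|$, nonnegative $R_j$ and $C^E_{jk}$ with $R_j+C^E_{jk}=\rho_j$), whereas your primary choice $D_j=0$ removes evacuation altogether; both variants are valid.
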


\begin{proof}
Consider an arbitrary \(0\)--\(1\) knapsack instance with item weights \(w_j\),
profits \(\rho_j\), and capacity \(W\). Construct a restricted resilience
planning instance with a single scenario of probability one. For every sending
facility \(j\), let
\[
F_{sj}=H_j^{\max}=D_j=1,\qquad
C_j^H=w_j,
\]
and introduce one safe receiving facility \(k\) with sufficient capacity,
\(A_k\ge |J^{\mathrm{send}}|\). Set \(B=W\) and choose nonnegative restoration
and evacuation costs satisfying
\[
R_j+C_{jk}^E=\rho_j.
\]

Because \(F_{sj}=H_j^{\max}=1\), the hardening decision is binary:
\(y_j\in\{0,1\}\). If \(y_j=0\), the flood-status and evacuation constraints
force
\[
z_{sj}=\gamma_{sj}=q_{sjk}=1.
\]
If \(y_j=1\), setting
\[
z_{sj}=\gamma_{sj}=q_{sjk}=0
\]
is feasible and optimal because all associated costs are nonnegative.
Therefore, at optimality,
\[
z_{sj}=\gamma_{sj}=q_{sjk}=1-y_j.
\]
The restricted problem consequently becomes
\[
\min \sum_{j\in J^{\mathrm{send}}}
(R_j+C_{jk}^E)(1-y_j)
\qquad
\text{s.t.}
\qquad
\sum_{j\in J^{\mathrm{send}}} C_j^H y_j\le B,
\qquad
y_j\in\{0,1\}.
\]
Since \(\sum_j(R_j+C_{jk}^E)\) is constant, substituting the constructed
parameter values yields the equivalent problem
\[
\max \sum_{j\in J^{\mathrm{send}}}\rho_j y_j
\qquad
\text{s.t.}
\qquad
\sum_{j\in J^{\mathrm{send}}}w_j y_j\le W,
\qquad
y_j\in\{0,1\},
\]
which is the original \(0\)--\(1\) knapsack problem. This shows that the knapsack problem is a special case of the restricted instance of the resilience-planning problem. Since \(0\)--\(1\) knapsack is NP-hard, the restricted problem, and therefore the general facility resilience-planning problem, is also NP-hard.
\end{proof}

The full formulation additionally includes scenario-dependent facility states,
capacitated evacuation flows, and, under the \(\varepsilon\)-constraint
formulation, coupling across scenarios and facilities through the disruption-impact constraint. Its extensive form therefore grows rapidly with both the number of facilities and the number of flood scenarios. The complexity result rules out
a polynomial-time exact algorithm in general, but the formulation retains substantial exploitable structure: conditional on the first-stage hardening and
facility-status decisions, evacuation decisions separate by scenario and form
polynomial-time transportation problems. This structure motivates the decomposition and relaxation methods developed next.

%% file: 03_solution_approaches.tex
\section{Solution approaches}
\label{sec:solution}

We refer to solving the full formulation directly with an optimization solver
as the extensive approach. We use an exact multi-cut Benders decomposition as the primary exact solution method and a Lagrangian-dual
method as a scalable approximation. Each approach solves a sequence of $\varepsilon$-constrained problems to construct the Pareto frontier.

\subsection{Exact multi-cut Benders decomposition}
\label{sec:benders}

A direct decomposition that assigns all second-stage variables and linking
constraints to the subproblems would produce mixed-integer subproblems whose
value functions are generally nonconvex in the first-stage decisions.
Classical linear Benders optimality cuts would therefore not be available.
To preserve the L-shaped structure, the master problem contains the hardening variables $y$, facility-status
variables $\gamma$, excess-depth variables $z$, and one recourse-cost
approximation variable $\beta_s$ for each scenario. It also retains the
original budget, disruption-impact, and flood-damage constraints together with
the accumulated scenario-specific Benders cuts. Let \(\mathcal C_s\) denote
the current cut set for scenario \(s\). The master problem is
\begin{align}
\min_{y,\gamma,z,\beta}\quad
& \sum_{s\in\mathcal S}P_s
\left(\sum_{j\in J^{\mathrm{send}}}R_jz_{sj}+\beta_s\right)
\label{MP:obj}
\\
\text{s.t.}\quad
& \sum_{s\in\mathcal S}P_s
\sum_{j\in J^{\mathrm{send}}}V_jz_{sj}\leq\varepsilon,
\label{MP:epsilon}
\\
& \sum_{j\in J^{\mathrm{send}}}C_j^Hy_j\leq B,
\label{MP:budget}
\\
& z_{sj}\geq F_{sj}-y_j,
&& \forall s\in\mathcal S,\ j\in J^{\mathrm{send}},
\label{MP:z_lb}
\\
& z_{sj}\leq F_{sj}\gamma_{sj},
&& \forall s\in\mathcal S,\ j\in J^{\mathrm{send}},
\label{MP:z_ub}
\\
& \beta_s\geq
\sum_{j\in J^{\mathrm{send}}}\zeta_{sj}^{r}D_j\gamma_{sj}
+\sum_{k\in J_s^{\mathrm{recv}}}\chi_{sk}^{r}A_k,
&& \forall r\in\mathcal C_s,\ s\in\mathcal S,
\label{MP:cuts}
\\
& y_j\in\{0,\ldots,H_j^{\max}\},
&& \forall j\in J^{\mathrm{send}},
\notag\\
& \gamma_{sj}\in\{0,1\},\quad z_{sj}\geq0,
&& \forall s\in\mathcal S,\ j\in J^{\mathrm{send}},
\notag\\
& \beta_s\geq0,
&& \forall s\in\mathcal S.
\notag
\end{align}
Here \((\zeta^r,\chi^r)\) denotes an extreme-point dual solution generated
in an earlier iteration. Initially, each \(\mathcal C_s\) is empty. At iteration $t$, the master problem yields a candidate solution
$(y^t,z^t,\gamma^t,\beta^t)$. For fixed facility-status decisions
$\gamma_s^t$, the scenario-$s$ evacuation subproblem is
\begin{align}
\theta_s(\gamma_s^t)
=
\min_q\quad
& \sum_{(j,k)\in\mathcal{A}_s}
C_{jk}^E q_{sjk}
\label{SP:obj}
\\[3pt]
\text{s.t.}\quad
& \sum_{k:(j,k)\in\mathcal{A}_s}
q_{sjk}
=
D_j\gamma_{sj}^t,
&& \forall j\in J^{\mathrm{send}},
\label{SP:supply}
\\
& \sum_{j:(j,k)\in\mathcal{A}_s}
q_{sjk}
\leq A_k,
&& \forall k\in J_s^{\mathrm{recv}},
\label{SP:capacity}
\\
& q_{sjk}\geq0,
&& \forall (j,k)\in\mathcal{A}_s.
\label{SP:domain}
\end{align}

This subproblem is a capacitated transportation problem. Its constraint
matrix is totally unimodular; therefore, integer demands and receiving
capacities yield integral extreme-point solutions without imposing
integrality restrictions on $q$.

Let $\zeta_{sj}$ be the unrestricted dual variable associated with
constraint~\eqref{SP:supply}, and let $\chi_{sk}\leq0$ be the dual variable
associated with constraint~\eqref{SP:capacity}. The corresponding dual is
\begin{align}
\max_{\zeta,\chi}\quad
& \sum_{j\in J^{\mathrm{send}}}\zeta_{sj}D_j\gamma_{sj}^t
+\sum_{k\in J_s^{\mathrm{recv}}}\chi_{sk}A_k
\label{DSP:obj}
\\
\text{s.t.}\quad
& \zeta_{sj}+\chi_{sk}\leq C_{jk}^E,
&& \forall (j,k)\in\mathcal A_s,
\label{DSP:arc}
\\
& \chi_{sk}\leq0,
&& \forall k\in J_s^{\mathrm{recv}}.
\label{DSP:sign}
\end{align}
Let
$(\zeta_{sj}^t,\chi_{sk}^t)$ be an optimal dual solution. By strong duality,
the exact recourse cost is
\begin{align}
\theta_s(\gamma_s^t)
=
\sum_{j\in J^{\mathrm{send}}}
\zeta_{sj}^tD_j\gamma_{sj}^t
+
\sum_{k\in J_s^{\mathrm{recv}}}
\chi_{sk}^tA_k.
\label{CUT:recourse_value}
\end{align}

A cut is generated whenever
\begin{align}
\beta_s^t
<
\theta_s(\gamma_s^t)-\kappa,
\label{CUT:violation_test}
\end{align}
where $\kappa>0$ is a numerical tolerance. The corresponding scenario-specific
Benders optimality cut is
\begin{align}
\beta_s
\geq
\sum_{j\in J^{\mathrm{send}}}
\zeta_{sj}^tD_j\gamma_{sj}
+
\sum_{k\in J_s^{\mathrm{recv}}}
\chi_{sk}^tA_k.
\label{CUT:multicut}
\end{align}

The cut is a valid supporting hyperplane of the scenario recourse function
and is tight at $\gamma_s^t$. Separate violated cuts are added for all
scenarios at each iteration. As the cut pools expand, the master problem
progressively internalizes the evacuation costs associated with alternative
facility-operationality patterns.

The dummy receiving destination introduced in the model guarantees complete
recourse for every feasible master solution; hence, feasibility cuts are not
required. The algorithm terminates when no scenario violates
condition~\eqref{CUT:violation_test}. Because the master contains finitely
many discrete operationality patterns and the subproblems generate exact
linear optimality cuts, the procedure converges finitely to an optimal
solution of each $\varepsilon$-constrained problem.

Although both the extensive formulation and the Benders master problem remain
mixed-integer programs, the decomposition removes the dense evacuation-flow
networks from the master and transfers them to polynomial-time transportation
subproblems. To reduce repeated model-construction overhead, we retain the Benders master and scenario subproblem models throughout the Pareto-frontier sweep and update
only the right-hand side of the $\varepsilon$-constraint between solves. We do not provide explicit MIP starts. The scenario LPs are reoptimized after right-hand-side updates using dual simplex.

\subsection{Scalable Lagrangian-dual method}
\label{sec:lagrangian_dual}

To complement the exact Benders method, we develop a Lagrangian-dual (LD)
approach that relaxes the
service-disruption-impact constraint~\eqref{eq:epsilon}, the hardening-budget constraint~\eqref{eq:budget}, and the receiving-capacity constraints~\eqref{eq:recv_capacity}. The resulting problem decomposes by facility and can be solved by enumerating a small set
of candidate hardening levels. Projected subgradient ascent produces lower
bounds, while a tailored primal-recovery
procedure constructs feasible solutions and corresponding upper bounds. The
LD method therefore provides a scalable approximation for large instances.

\subsubsection{Restricted evacuation network}
\label{sec:ld_restricted_network}

To reduce repeated evacuation calculations, LD retains only the $K$
lowest-cost feasible receivers for each sender--scenario pair. For
$s\in\mathcal S$ and $j\in J^{\mathrm{send}}$, let
$K_{sj}\subseteq J_s^{\mathrm{recv}}$ denote this receiver set, and define
\begin{align*}
\widetilde J_s^{\mathrm{recv}}
&=
\bigcup_{j\in J^{\mathrm{send}}}K_{sj},
\\
\widetilde{\mathcal A}_s
&=
\left\{
(j,k):
j\in J^{\mathrm{send}},\;
k\in K_{sj},\;
k\neq j
\right\}.
\end{align*}

For fixed \(s\) and \(j\), let \(r_{sj}(k)\) be the rank of receiver \(k\)
in nondecreasing evacuation-cost order. Consider a fixed
\(\varepsilon\)-constrained problem, let
\((y^\star,\gamma^\star,z^\star,q^\star)\) be an optimal full-network
solution, and define
\begin{equation}
K^\star=\max\{r_{sj}(k):q_{sjk}^\star>0\}.
\label{eq:kstar}
\end{equation}

\begin{proposition}
If \(K\geq K^\star\), the restricted- and full-network formulations have the
same optimal objective value.
\end{proposition}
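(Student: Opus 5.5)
The argument is a two-sided comparison of optimal values, resting on the fact that the restricted network is a subnetwork of the full one. Throughout, the $\varepsilon$-constrained problem is fixed, and the restricted formulation is the full formulation with $\mathcal{A}_s$ replaced by $\widetilde{\mathcal A}_s$ and $J_s^{\mathrm{recv}}$ by $\widetilde J_s^{\mathrm{recv}}$. Both formulations keep the same first-stage and status variables $(y,\gamma,z)$, the same budget constraint~\eqref{eq:budget}, and the same $\varepsilon$-constraint~\eqref{eq:epsilon}. The dummy destination is treated as a receiver in both; I will assume it is ranked like any other receiver, or always retained, so that the restricted problem also has complete recourse.

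\emph{Step 1 (restricted value $\geq$ full value).} Since $K_{sj}\subseteq J_s^{\mathrm{recv}}$, we have $\widetilde{\mathcal A}_s\subseteq\mathcal A_s$. Take any feasible restricted solution and extend its flow by $q_{sjk}=0$ on $\mathcal A_s\setminus\widetilde{\mathcal A}_s$. The extended solution satisfies the supply equalities and the receiving capacities~\eqref{eq:recv_capacity}: receivers outside $\widetilde J_s^{\mathrm{recv}}$ get zero inflow, and $A_k\ge 0$ for every $k\in J_s^{\mathrm{recv}}$. It leaves $f_1$ and $f_2$ unchanged. Every restricted-feasible solution is therefore full-feasible at the same cost, so the restricted optimum is at least the full optimum.

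\emph{Step 2 (restricted value $\leq$ full value).} By the definition~\eqref{eq:kstar} of $K^\star$, every arc with $q^\star_{sjk}>0$ satisfies $r_{sj}(k)\le K^\star\le K$. It remains to check that rank at most $K$ means $k\in K_{sj}$. $K_{sj}$ is the set of the $K$ lowest-cost feasible receivers, and $r_{sj}$ is the rank in the same nondecreasing order, so this holds provided ties are broken consistently in both definitions. I will state this explicit convention: $K_{sj}$ consists exactly of the receivers of rank $1,\dots,K$. If fewer than $K$ receivers exist, $K_{sj}$ is the whole set and the claim is trivial.

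Every positive arc of $q^\star$ therefore lies in $\widetilde{\mathcal A}_s$; the condition $k\neq j$ already holds in $\mathcal A_s$. Dropping the zero arcs outside $\widetilde{\mathcal A}_s$ gives a flow that satisfies the supply equalities with the same $\gamma^\star$ and the capacity constraints on $\widetilde J_s^{\mathrm{recv}}$. Together with unchanged $(y^\star,\gamma^\star,z^\star)$, which keep the budget and $\varepsilon$ constraints, this is restricted-feasible with objective equal to the full optimum. Combining the two steps gives equality.

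\emph{Main obstacle.} The logic is elementary; the delicate point is the definitional consistency between $r_{sj}$ and $K_{sj}$, namely tie-breaking and the status of the dummy node. I would handle this by fixing a single tie-breaking rule for both and noting it in the proof. A secondary remark is that $K^\star$ depends on the chosen optimal solution, so the result holds for any optimal solution used in~\eqref{eq:kstar}. The natural sufficient condition is therefore $K$ at least the minimum of $K^\star$ over all full-network optima.
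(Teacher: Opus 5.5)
Your proof is correct and follows the paper's argument: the restricted feasible region lies inside the full one, which gives one inequality, and when $K\ge K^\star$ every arc used by $q^\star$ is retained, which gives the other. You also make two points explicit that the paper leaves implicit: the tie-breaking convention linking $r_{sj}$ and $K_{sj}$, and the fact that $K^\star$ depends on which optimal solution is chosen.
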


\begin{proof}
The restricted feasible region is contained in the full-network feasible
region, so its optimum cannot be smaller. If \(K\geq K^\star\), every arc used
by \(q^\star\) is retained; hence the full-network optimum remains feasible in
the restricted formulation, whose optimum cannot be larger. The two optimal
values are therefore equal.
\end{proof}

For a budget \(B\) and evaluated threshold set \(\mathcal E_B\), taking
\(K\geq\max_{\varepsilon\in\mathcal E_B}K^\star(\varepsilon)\) preserves at
least one full-network optimum at every evaluated frontier point. This
sufficient threshold is not observable before solving the full-network
problems and may equal the full receiver set in the worst case because
facilities compete for limited capacity. We therefore select \(K\) through
validation against full-network Benders solutions. Unless exactness has been
established, the LD lower bound applies to the restricted-network problem;
any feasible solution recovered on the restricted network is also feasible
for the original full-network problem.

\subsubsection{Lagrangian relaxation and facility-wise decomposition}
\label{sec:ld_relaxation}

Let $\mu\geq0$ be the multiplier associated with the hardening-budget
constraint, $\pi_{sk}\geq0$ the multiplier associated with receiving capacity
at facility $k$ in scenario $s$, and $\nu\geq0$ the multiplier associated
with the service-disruption-impact constraint. Because
$K_{sj}\subseteq J_s^{\mathrm{recv}}$, every retained receiver is operational
in scenario $s$ and has available capacity $A_k$. For fixed multipliers, the
Lagrangian relaxation is
\begin{align*}
L(\mu,\pi,\nu)
:=
\min_{y,z,\gamma,q}\quad
&
\sum_{s\in\mathcal S}P_s
\left(
\sum_{j\in J^{\mathrm{send}}}R_jz_{sj}
+
\sum_{(j,k)\in\widetilde{\mathcal A}_s}
C_{jk}^Eq_{sjk}
\right)
\notag\\
&+
\mu
\left(
\sum_{j\in J^{\mathrm{send}}}C_j^Hy_j-B
\right)
\notag\\
&+
\sum_{s\in\mathcal S}
\sum_{k\in\widetilde J_s^{\mathrm{recv}}}
\pi_{sk}
\left(
\sum_{j:(j,k)\in\widetilde{\mathcal A}_s}q_{sjk}
-A_k
\right)
\notag\\
&+
\nu
\left(
\sum_{s\in\mathcal S}P_s
\sum_{j\in J^{\mathrm{send}}}V_jz_{sj}
-\varepsilon
\right)
\\
\text{s.t.}\quad
&
z_{sj}\geq F_{sj}-y_j,
&&
\forall s\in\mathcal S,\;
j\in J^{\mathrm{send}},
\notag\\
&
z_{sj}\leq F_{sj}\gamma_{sj},
&&
\forall s\in\mathcal S,\;
j\in J^{\mathrm{send}},
\notag\\
&
\sum_{k\in K_{sj}}q_{sjk}
=
D_j\gamma_{sj},
&&
\forall s\in\mathcal S,\;
j\in J^{\mathrm{send}},
\notag\\
&
y_j\in\{0,\ldots,H_j^{\max}\},
&&
\forall j\in J^{\mathrm{send}},
\notag\\
&
\gamma_{sj}\in\{0,1\},\quad z_{sj}\geq0,
&&
\forall s\in\mathcal S,\;
j\in J^{\mathrm{send}},
\notag\\
&
q_{sjk}\geq0,
&&
\forall s\in\mathcal S,\;
(j,k)\in\widetilde{\mathcal A}_s.
\notag
\end{align*}
For every $(\mu,\pi,\nu)\geq0$, $L(\mu,\pi,\nu)$ is a valid lower bound on
the optimal value of the restricted-network $\varepsilon$-constraint problem.

For fixed multipliers, the relaxed problem decomposes by sending facility.
Define
\begin{equation*}
W_j(\nu)=R_j+\nu V_j,
\end{equation*}
and, for a fixed hardening level $y_j$,
\begin{equation*}
\gamma_{sj}(y_j)=\mathbf 1[F_{sj}>y_j],
\qquad
z_{sj}(y_j)=(F_{sj}-y_j)_+.
\end{equation*}
Because hosting-capacity constraints are relaxed, evacuees from origin $j$ in scenario $s$ are routed independently to the cheapest destination under the modified costs
\begin{equation}
m_{sj}(\pi)
=
\min_{k\in K_{sj}}
\left\{
P_sC_{jk}^E+\pi_{sk}
\right\},
\label{LR:modified_cost}
\end{equation}
where $P_s C^E_{jk}+\pi_{sk}$ is the modified per-unit transportation cost. The facility-level contribution is therefore
\begin{align*}
\phi_j(y_j;\mu,\pi,\nu)
=
\mu C_j^Hy_j
+
\sum_{s\in\mathcal S:\,F_{sj}>y_j}
\left[
D_jm_{sj}(\pi)
+
P_sW_j(\nu)(F_{sj}-y_j)
\right],
\end{align*}
and the Lagrangian value is
\begin{align*}
L(\mu,\pi,\nu)
=
\sum_{j\in J^{\mathrm{send}}}
\min_{y_j\in\mathcal Y_j}
\phi_j(y_j;\mu,\pi,\nu)
-
\sum_{s\in\mathcal S}
\sum_{k\in\widetilde J_s^{\mathrm{recv}}}
\pi_{sk}A_k
-\mu B-\nu\varepsilon.
\end{align*}

The function $\phi_j$ is piecewise linear in $y_j$, with breakpoints at the
scenario flood depths. Hence, it suffices to evaluate the distinct values in
\begin{equation}
\mathcal Y_j
=
\{0\}
\cup
\{F_{sj}:s\in\mathcal S\}.
\label{LR:candidate_levels}
\end{equation}
This set contains at most $|\mathcal S|+1$ candidate values, although many
values coincide in practice. To evaluate them efficiently, define
\begin{equation*}
\vartheta_{sj}=P_sW_j(\nu),
\qquad
\upsilon_{sj}=P_sW_j(\nu)F_{sj}+D_jm_{sj}(\pi).
\end{equation*}
For each facility \(j\), sort the scenarios so that
\(F_{(1)j}\leq\cdots\leq F_{(|\mathcal S|)j}\), and construct the suffix sums
\begin{equation*}
\Theta_{ij}=\sum_{r=i}^{|\mathcal S|}\vartheta_{(r)j},
\qquad
\Upsilon_{ij}=\sum_{r=i}^{|\mathcal S|}\upsilon_{(r)j}.
\end{equation*}
If \(F_{(i-1)j}\leq y_j<F_{(i)j}\), the active scenarios are
\(i,\ldots,|\mathcal S|\), and
\begin{equation}
\phi_j(y_j;\mu,\pi,\nu)
=\mu C_j^Hy_j+\Upsilon_{ij}-y_j\Theta_{ij}.
\label{eq:suffix_phi}
\end{equation}
Thus, after sorting once, all candidate levels can be evaluated in linear
time for each facility. Consequently, each iteration of the Lagrangian
subproblem can be solved in
\[
O\!\left(
|J^{\mathrm{send}}|\,|\mathcal S|\,K
\right)
\]
time. Pseudocode for the \textsc{EnumerateHardening} subroutine is provided
in the Supplementary Material.

\subsubsection{Dual optimization and tailored primal recovery}
\label{sec:ld_dual_recovery}

The Lagrangian dual problem is
\[
\max_{\mu\geq0,\;\pi\geq0,\;\nu\geq0}
L(\mu,\pi,\nu).
\]

Given the Lagrangian minimizer
$(y^t,\gamma^t,z^t,q^t)$ at iteration $t$, the subgradient components are
\begin{align*}
g_\mu^t
&=
\sum_{j\in J^{\mathrm{send}}}C_j^Hy_j^t-B,
\\
g_{sk}^t
&=
\sum_{j:(j,k)\in\widetilde{\mathcal A}_s}
q_{sjk}^t-A_k,
&&
\forall s\in\mathcal S,\;
k\in\widetilde J_s^{\mathrm{recv}},
\\
g_\nu^t
&=
\sum_{s\in\mathcal S}P_s
\sum_{j\in J^{\mathrm{send}}}V_jz_{sj}^t
-\varepsilon.
\end{align*}

We use projected subgradient ascent with the Polyak step size
\[
\eta^t
=
\theta
\frac{UB-L(\mu^t,\pi^t,\nu^t)}
{\lVert g^t\rVert_2^2},
\qquad
\theta\in(0,2),
\]
where $UB$ is the best feasible upper bound available at iteration $t$.
The multipliers are updated as
\begin{align*}
\mu^{t+1}
&=
[\mu^t+\eta^tg_\mu^t]_+,
\\
\pi_{sk}^{t+1}
&=
[\pi_{sk}^t+\eta^tg_{sk}^t]_+,
\\
\nu^{t+1}
&=
[\nu^t+\eta^tg_\nu^t]_+.
\end{align*}

The Lagrangian minimizer need not satisfy the relaxed constraints. At each
iteration, a tailored primal-recovery procedure therefore:
(i) removes hardening units until the budget constraint is satisfied
(\textsc{BudgetRepair});
(ii) uses the remaining budget to add hardening units until
$f_2\leq\varepsilon$ (\textsc{EpsilonRepair}); and
(iii) if the first two steps succeed, constructs evacuation flows satisfying
the receiving-capacity constraints on the restricted network
(\textsc{GreedyTransport}).

For facility $j$ and hardening increment
$\ell\in\{1,\ldots,H_j^{\max}\}$, define
\begin{align*}
\Delta_j^{\mathrm{imp}}(\ell)
&=
\sum_{s\in\mathcal S:\,F_{sj}\geq\ell}
P_sV_j,
\\
\Delta_j^{\mathrm{rest}}(\ell)
&=
\sum_{s\in\mathcal S:\,F_{sj}\geq\ell}
P_sR_j.
\end{align*}
These quantities measure the marginal reductions in service-disruption impact
and expected restoration loss, respectively, obtained by increasing the
hardening level from $\ell-1$ to $\ell$. Hardening additions and removals are
ranked using
\[
\rho_j(\ell)
=
\frac{
(1-\omega)
\dfrac{\Delta_j^{\mathrm{imp}}(\ell)}
{\Delta_{\max}^{\mathrm{imp}}}
+
\omega
\dfrac{\Delta_j^{\mathrm{rest}}(\ell)}
{\Delta_{\max}^{\mathrm{rest}}}
}
{C_j^H},
\qquad
\omega\in[0,1],
\]
where
\[
\Delta_{\max}^{\mathrm{imp}}
=
\max_{j,\ell}\Delta_j^{\mathrm{imp}}(\ell),
\qquad
\Delta_{\max}^{\mathrm{rest}}
=
\max_{j,\ell}\Delta_j^{\mathrm{rest}}(\ell).
\]
\textsc{BudgetRepair} removes hardening units in increasing order of
$\rho_j(\ell)$, whereas \textsc{EpsilonRepair} adds units in decreasing
order.

Given the repaired hardening plan, \textsc{GreedyTransport} processes
evacuating facilities in decreasing order of demand and assigns their demand
to receivers in increasing order of evacuation cost while updating residual
capacities. Recovery succeeds only if the complete demand of every evacuating
facility is assigned. A successful recovery produces a feasible solution and
the upper bound
\[
UB^t
=
\sum_{s\in\mathcal S}P_s
\left(
\sum_{j\in J^{\mathrm{send}}}R_j\bar z_{sj}
+
\sum_{(j,k)\in\widetilde{\mathcal A}_s}
C_{jk}^E\bar q_{sjk}
\right).
\]
Detailed pseudocode for the \textsc{PrimalRecovery} procedure, integrating
the \textsc{BudgetRepair}, \textsc{EpsilonRepair}, and
\textsc{GreedyTransport} subroutines, is provided in the Supplementary
Material.

\subsubsection{Frontier sweep}
\label{sec:ld_frontier}

The $\varepsilon$ values are processed from the tightest to the loosest impact
threshold. Before the first LD iteration at the tightest threshold, a constructive heuristic \textsc{InitialFeasiblePoint} generates an
initial feasible plan by adding hardening units according to impact reduction
per dollar, using restoration reduction as a tie-breaker.  At subsequent
frontier points, the final multipliers and best feasible plan from the
preceding point are used as warm starts. Because a plan feasible for a tighter
impact threshold remains feasible for a looser threshold, the previous
incumbent supplies an immediate upper bound.

The complete LD algorithm, including the $\varepsilon$-sweep procedure and
detailed pseudocode for \textsc{InitialFeasiblePoint}, is provided in the
Supplementary Material.

For each threshold, we report
\begin{equation}
\mathrm{gap}
=
\frac{UB-LB}{\max\{1,UB\}}.
\label{LR:reported_gap}
\end{equation}
This is a duality gap for the restricted-network formulation. The method
terminates when the prescribed gap tolerance is met or the iteration limit is
reached.

\subsection{Adaptive solution of the multi-objective problem}
\label{sec:adaptive_multiobjective}
Rather than using uniformly spaced thresholds, we generate up to a prescribed
number of interior $\varepsilon$ values through adaptive interval refinement.
Let the currently accepted non-dominated points be ordered by
$\varepsilon_1<\cdots<\varepsilon_{\mathcal M}$. For interval
$[\varepsilon_m,\varepsilon_{m+1}]$, define its primary-objective variation as
\[
\Delta_m^{(1)}
=
\left|
f_1(\varepsilon_{m+1})-f_1(\varepsilon_m)
\right|.
\]

For each interior point, use the finite-difference curvature proxy
\[
\kappa_m
=
\frac{
\left|
f_1(\varepsilon_{m+1})
-2f_1(\varepsilon_m)
+f_1(\varepsilon_{m-1})
\right|
}{
[(\varepsilon_{m+1}-\varepsilon_{m-1})/2]^2
},
\qquad
m=2,\ldots,\mathcal M-1,
\]
and assign interval $m$ the curvature score
$\Delta_m^{(2)}=\max\{\kappa_m,\kappa_{m+1}\}$, using the available
neighboring value for a boundary interval. An interval is eligible when
either score lies at or above its empirical $q$-quantile across the current
intervals.

The highest-priority eligible interval is evaluated at its midpoint, provided
that the candidate is separated from all previously attempted thresholds by
at least
\[
\tau_\varepsilon
=
\alpha\,Q_{0.5}
\left(
\{\varepsilon_{r+1}-\varepsilon_r\}_{r=1}^{\mathcal M-1}
\right),
\]
where $Q_{0.5}$ denotes the empirical median. If the midpoint produces a
duplicate or dominated objective point, the interval is marked as a plateau
and excluded from further refinement; otherwise, the new point is inserted
into the ordered set. Refinement stops when the prescribed number of interior
points is reached or no admissible interval remains. 

For each fixed budget level, the adaptive procedure is executed once to
generate a common set of $\varepsilon$ values. The same set is then used for
the extensive-form approach, Benders decomposition, and the LD method,
ensuring comparison on identical $\varepsilon$-constrained instances. The
one-time adaptive generation is excluded from the method-specific runtime
comparisons.

%% file: 04_case_study.tex
\section{Case study and experimental design}
\label{sec:case_study_experimental_design}

\subsection{Study system and facility data}
\label{sec:facility_data}

We study flood-resilience investments for hospitals and nursing homes across
Texas. Facility locations and licensed bed counts are obtained from archived
2022 versions of the Homeland Infrastructure Foundation-Level Data
(HIFLD).\footnote{\url{https://www.arcgis.com/home/item.html?id=f36521f6e07f4a859e838f0ad7536898}
and
\url{https://www.arcgis.com/home/item.html?id=923e9f63e2bb4877bf58db1cd1dd1d82}.
These datasets are no longer publicly available in the form used here.}
After removing closed facilities and records without bed-count information,
the study network contains $3{,}752$ facilities: $761$ hospitals and $2{,}991$
nursing homes.

Let $O_j$ denote the number of beds at facility $j$. Potential evacuation
demand is estimated as
$D_j=\left\lceil O_j\rho_j\right\rceil$, where $\rho_j$ is the assumed bed-occupancy rate, set to $0.75$ for hospitals and $0.63$ for nursing homes
\citep{cdc_hospital_occupancy,tx_hhsc_nursing_occupancy}. The remaining capacity available to receive patients is $A_j=O_j-D_j$. Of the full facility set, $122$ facilities experience positive flood depth in
at least one scenario and form $J^{\mathrm{send}}$; these include $23$
hospitals and $99$ nursing homes. 

\subsection{Flood scenario construction}
\label{sec:scenario_generation}

We construct facility-level scenarios of tropical-cyclone-induced fluvial
flooding following established hydrologic and climate-impact modeling
approaches
\citep{johnson_integrated_2019,kim_hurricane_2021,
maraun_statistical_2018}, with implementation details
adapted from \citet{Tabassum2024Dissertation}. The historical event set contains
27 tropical cyclones from 1993--2018, corresponding to the National Water Model (NWM) Reanalysis
period. We include storms that either made landfall in Texas or produced
substantial rainfall within a 60-mile buffer of the state boundary. Historical
storm precipitation is perturbed using future extreme-precipitation projections
from an ensemble of 15 Coupled Model Intercomparison Project Phase 6 (CMIP6) global climate models
\citep{ONeill2016SSP}.

The main scenario set, \Smain, contains 27 storm realizations based on the
median Shared Socioeconomic Pathway 5–8.5 (SSP5-8.5) precipitation response for 2025--2050, representing a
high-emissions, fossil-fuel--intensive pathway. Across exposed facilities,
the resulting maximum required protection levels $H_j^{\max}$ range from 2 to 20 ft, with an average of approximately 8 ft. The scenarios are assigned equal probabilities because the historical storm set is treated as an empirical sample of tropical cyclones affecting Texas. For computational stress testing,
\Spercentile\ applies the 25th, 50th, and 75th percentile CMIP6 responses to
each storm, yielding 81 scenarios and more severe exposure, with maximum required protection levels ranging from 3 to 44 ft. The percentile weights are
$(0.25,0.50,0.25)$, respectively. The historical storm list is provided in the Supplementary Material.

\subsection{Model parameterization}
\label{sec:case_parameterization}

All monetary parameters are expressed in constant
2024 U.S. dollars. The hardening decision $y_j$ represents feet of permanent flood protection.
Its unit cost is
\begin{equation*}
C_j^H=uX_j,
\qquad
u=\$200
\ \text{per linear foot per foot of protection},
\end{equation*}
where $X_j$ is an effective facility perimeter estimated from bed capacity
using size-dependent building-area data \citep{definitivehc_sqft}. The unit
cost $u$ is selected from FEMA planning-level floodwall cost ranges and
validated against the reported Lourdes Hospital floodwall project
\citep{SCCOG_critical_facilities,fema_advanced_2012}. Additionally, because facility-level information on existing flood protection is unavailable, all facilities are assumed to have zero initial hardening.

The restoration coefficient combines physical damage and business
interruption:
\begin{equation*}
R_j
=
\varphi_j
\left(
r_{\mathrm{phys}}^{t(j)}
+
r_{\mathrm{BI}}^{t(j)}
\right)
=
\begin{cases}
79.5\,\varphi_j, & t(j)=H,\\
42.6\,\varphi_j, & t(j)=N,
\end{cases}
\end{equation*}
where $\varphi_j$ is estimated facility area and $t(j)$ denotes hospital
($H$) or nursing home ($N$). The physical-damage coefficients are derived
from U.S. Army Corps of Engineers depth--damage relationships and Hazus
replacement values
\citep{naccs_depth_damage,hazus_inventory_tm}. Business-interruption
coefficients are derived from Hazus daily income, wage, recapture, and
restoration-time parameters \citep{hazus_flood_tm}.

The service-disruption coefficient is
$V_j=I_j\alpha^{t(j)}\varphi_j$,
where $I_j\in[0,1]$ is the overall CDC/ATSDR Social Vulnerability Index for
the census tract containing facility $j$
\citep{cdc_svi,flanagan_social_2011}. The marginal restoration-time rates,
$\alpha^H=53.4$ and $\alpha^N=51.3$ days/ft, are estimated from Hazus
restoration-time data \citep{hazus_flood_tm}.

Finally, the per-patient evacuation cost is
\begin{equation*}
C_{jk}^E
=
18.45\,d_{jk}+45.9,
\end{equation*}
where $d_{jk}$ is the distance in miles. The distance-dependent component is
based on the City of Houston emergency medical services (EMS) mileage charge,\footnote{\url{https://cohweb.houstontx.gov/FIN_FeeSchedule}}
while the fixed labor component uses U.S. Bureau of Labor Statistics wage
data for a two-person ambulance crew.\footnote{\url{https://www.bls.gov/oes/current/oes292040.htm}}
We assume one hour of crew time per patient transfer, consistent with related
disaster-evacuation studies \citep{KIM2024104518}. Detailed derivations of
facility geometry, hardening, restoration, business interruption,
service-disruption, and evacuation parameters are provided in the
Supplementary Material.

\subsection{Computational design}
\label{sec:comp_setup}
We implement all methods in Python and use Gurobi to solve the mathematical
programs. We solve both the extensive formulation and the Benders master problem with a relative MIP optimality gap of $0.001\%$. The Benders cut-violation tolerance is
$\kappa=10^{-6}$. The LD method uses Polyak parameter $\theta=1.2$ and
primal-recovery weight $\omega=0.9$, and terminates when its reported
relative gap is at most $0.01\%$ or after $1{,}000$ iterations.

The full evacuation network contains approximately $456{,}000$ candidate arcs per
scenario on average. For LD, each sender considers at most $K=110$ candidate receivers, including the dummy receiver. This limits the effective network to at most $122\times110=13{,}420$ arcs per scenario, approximately $97\%$ fewer than in the full network.

The lexicographically refined frontier endpoints, including
$f_2^{\min}$ and $f_2^{\max}$, are computed once to optimality and shared
across all solution methods. $25$ interior $\varepsilon$ values are
generated using the adaptive procedure in
Section~\ref{sec:adaptive_multiobjective}. We set the eligibility
quantile to $q=0.8$ and the minimum-spacing parameter to $\alpha=0.25$. The
resulting fixed set of $\varepsilon$ values is then used for the extensive-form approach, Benders decomposition, and the LD method.

The reported runtime for each method includes all model-construction,
optimization, separation, and recovery operations required to solve the common
set of interior $\varepsilon$-constrained instances. It excludes the shared endpoint computations and the one-time adaptive generation of the
$\varepsilon$ set. All experiments are conducted using single-threaded
execution on an Intel Core i5-8265U CPU (four cores) to ensure consistent
runtime comparisons. 

%% file: 05_results.tex
\section{Results}
\label{sec:results}

\subsection{Economic-loss and service-disruption trade-offs under the main scenario set}

Figure~\ref{fig:pareto_budget_comparison} presents the trade-off between
expected economic cost, $f_1$, and expected service-disruption impact, $f_2$,
under the three hardening budgets.
\begin{figure}[!t]
\centering
\includegraphics[width=0.85\linewidth]{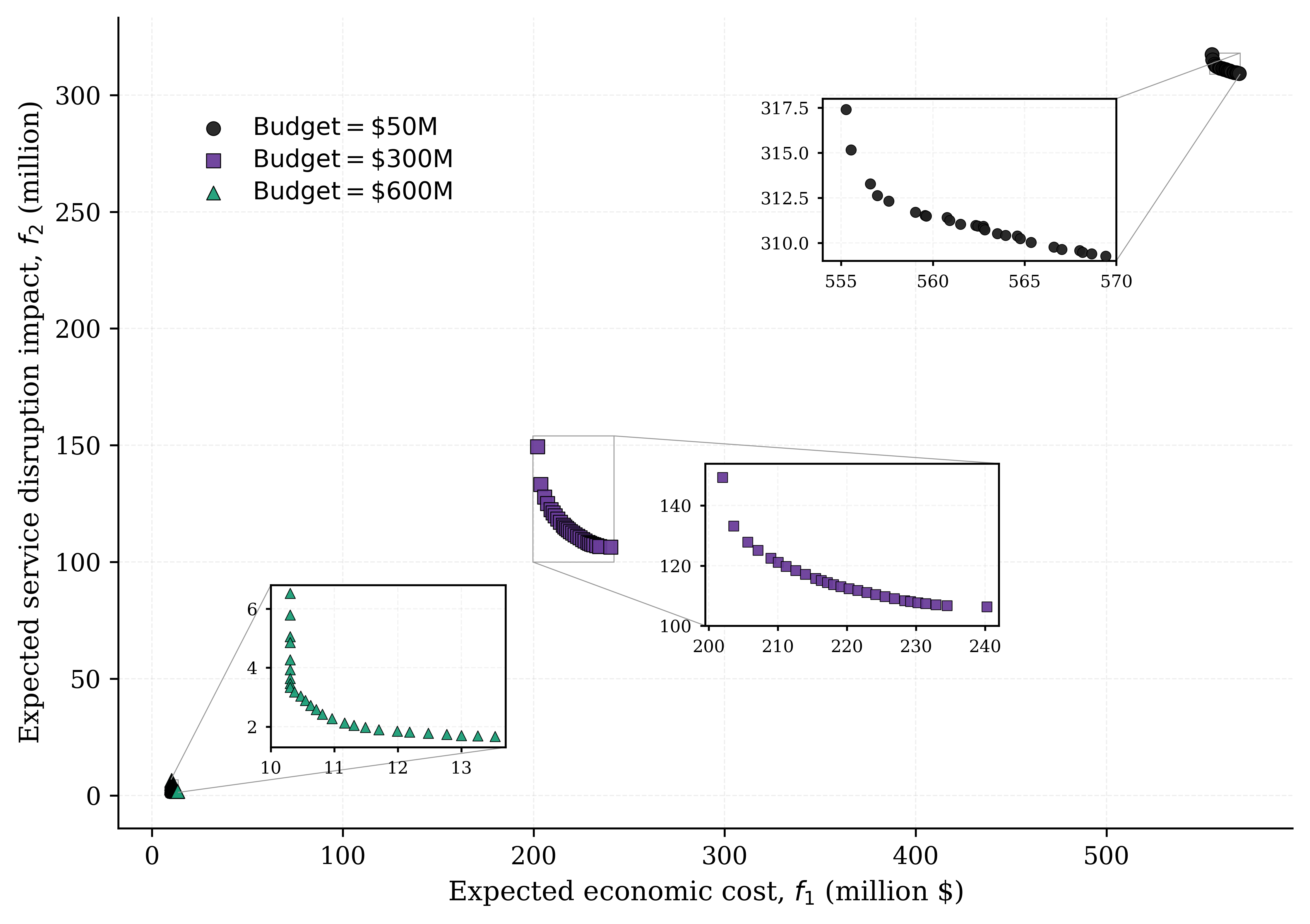}
\caption{Pareto frontiers under low (\$50M), moderate (\$300M), 
and high (\$600M) mitigation budgets. Insets provide zoomed views of the frontier regions to enhance visibility at different scales.}
\label{fig:pareto_budget_comparison}
\end{figure}

Table~\ref{tbl:representative_outcomes} reports the two endpoints and the knee
point for each frontier. The knee is selected as the sampled solution with the
maximum perpendicular distance from the line joining the two endpoints. In
addition to the objective values, the table reports operational measures that
clarify how the Pareto trade-offs translate into evacuation and hardening
decisions.

\begin{table}[!t]
\centering
\small
\setlength{\tabcolsep}{3.5pt}
\renewcommand{\arraystretch}{1.05}
\caption{Objective and operational outcomes for representative Pareto policies. Objective values are reported in millions. Hardening coverage
is the percentage of exposed facilities receiving positive protection, and
average depth is the mean protection level as a percentage of the maximum
required level.}
\label{tbl:representative_outcomes}
\resizebox{\linewidth}{!}{%
\begin{tabular}{llrrrrrr}
\toprule
Budget & Policy & $f_1$ & $f_2$
& \multicolumn{2}{c}{Evacuation demand (patients)}
& Coverage (\%) & Avg. depth (\%) \\
\cmidrule(lr){5-6}
& & & & Expected & Maximum & & \\
\midrule
\multirow{3}{*}{\$50M}
& Impact-min & 569.42 & 309.26 & 344.48 & 6{,}720 & 10.66 & 4.40 \\
& Knee       & 559.05 & 311.70 & 342.37 & 6{,}834 &  9.02 & 4.00 \\
& Cost-min   & 555.27 & 317.40 & 350.85 & 6{,}834 &  9.84 & 3.87 \\
\midrule
\multirow{3}{*}{\$300M}
& Impact-min & 240.25 & 106.35 & 140.78 & 2{,}832 & 42.62 & 37.79 \\
& Knee       & 211.19 & 119.78 & 117.78 & 2{,}781 & 43.44 & 34.61 \\
& Cost-min   & 201.98 & 149.36 &  99.81 & 2{,}275 & 48.36 & 40.22 \\
\midrule
\multirow{3}{*}{\$600M}
& Impact-min & 13.53 & 1.67 & 12.63 & 197 & 95.90 & 93.76 \\
& Knee       & 10.31 & 3.34 &  5.19 & 140 & 93.44 & 90.79 \\
& Cost-min   & 10.31 & 6.53 &  1.93 &  52 & 95.90 & 95.20 \\
\bottomrule
\end{tabular}%
}
\end{table}

The amount of disruption reduction attainable at low economic loss increases
substantially with the hardening budget. At \$600M, moving from the
cost-minimizing solution to the knee reduces $f_2$ by approximately $49\%$,
from $6.53$M to $3.34$M, while increasing $f_1$ by only $0.002\%$.
At \$300M, the corresponding move reduces $f_2$ by approximately $20\%$ at
a $4.6\%$ increase in $f_1$. At \$50M, the improvement is smaller: the knee
reduces $f_2$ by approximately $1.8\%$ while increasing $f_1$ by about
$0.7\%$. Beyond the knee, further reductions in $f_2$ require a sharp increase in $f_1$, indicating strong diminishing returns.

\subsection{How budget availability changes optimal protection}

Figure~\ref{fig:budget_ranges} shows how the attainable ranges of $f_1$ and
$f_2$ change with the hardening budget. Both objectives decline monotonically,
with the largest improvements occurring at low-to-moderate budget levels. The
first \$100M reduces both economic loss and service-disruption impact by more
than half, whereas subsequent increments produce progressively smaller gains.
\begin{figure}[!t]
    \centering
\includegraphics[width=0.85\linewidth]{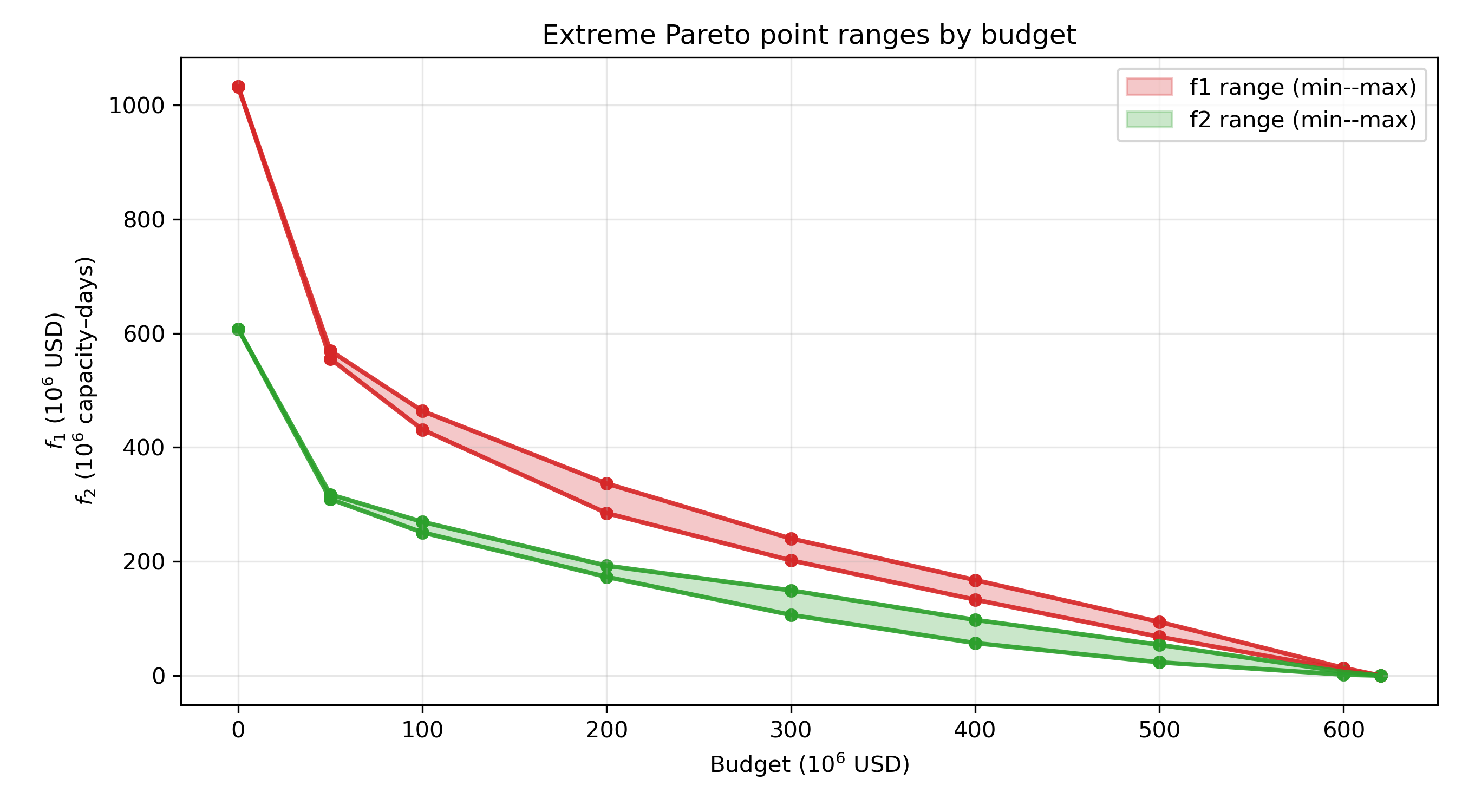}
    \caption{Extreme Pareto point ranges by mitigation budget.
    Shaded bands represent the minimum--maximum achievable values of
    $f_1$ (expected economic cost) and $f_2$ (expected service disruption impact)
    across the Pareto frontier at each budget level.}
    \label{fig:budget_ranges}
\end{figure}

The zero-budget case represents a reactive wait-and-see policy with no
hardening. Across the positive budgets considered, the break-even number of
comparable events required to recover the investment increases from
approximately $0.11$ to $0.60$. Thus, despite diminishing marginal returns,
the modeled expected economic-loss reduction exceeds the investment within
one comparable event at every budget level. Detailed endpoint and break-even
values are reported in the Supplementary Material.

The Pareto ranges are widest at intermediate budgets, where meaningful
trade-offs remain between economic loss and service-disruption impact. They
contract near full hardening: at \$600M,
$f_1\in[10.31,13.53]$M and $f_2\in[1.67,6.53]$M, while at the approximately
\$619M full-hardening threshold both objectives become zero. Hence, resilience
benefits are strongly front-loaded, and complete hardening ultimately
eliminates both residual losses and the conflict between objectives.

\subsection{Operational and distributional outcomes}
\label{sec:operational_distributional}

The Pareto trade-offs correspond to substantial changes in evacuation and
hardening decisions. As shown in Table~\ref{tbl:representative_outcomes},
expected evacuation demand falls from about 350 patients at the \$50M budget
to fewer than 13 patients at \$600M. Maximum scenario demand declines even
more sharply, from over 6{,}700 patients at \$50M to fewer than 200 at \$600M.
Although median evacuation demand is zero for nearly all representative
policies, these maxima reveal substantial tail risk at low and intermediate
budgets. The dummy receiver is never used because available physical receiving
capacity is sufficient to accommodate all evacuation demand.

Second, hardening investments expand rapidly as the available budget increases. At \$50M, only about 9--11\% of exposed facilities receive any protection, and the system-wide average protection depth remains below 5\% of the maximum feasible level. At \$300M, hardening coverage increases to approximately 43--48\%, while average protection depth reaches about 35--40\%. At \$600M, nearly all exposed facilities (93--96\%) receive some hardening, and average protection depth reaches approximately 91--95\% of the maximum feasible level. These patterns are consistent with a breadth-before-depth investment strategy: under limited budgets, the model favors distributing partial protection across facilities against lower-depth flood realizations rather than fully hardening a small number of facilities. As additional funding becomes available, protection levels deepen, extending coverage to higher-depth, less frequently realized flood conditions.

Facility capacity remains the common driver of prioritization across
the frontier: restoration loss, service-disruption impact, and evacuation demand increase with bed capacity. Tightening the disruption-impact constraint nevertheless changes the allocation of protection by placing greater weight on facilities located in higher-SVI census tracts. Figure~\ref{fig:budget_share_patterns}
shows that this reallocation is most pronounced at low-to-moderate budgets, where the budget is binding but still permits meaningful substitution across facilities.
\begin{figure}[!t]
\centering
\setlength{\tabcolsep}{4pt}
\begin{tabular}{cccc}
& \textbf{\$50M} & \textbf{\$300M} & \textbf{\$600M} \\

\rotatebox{90}{Budget share (\%)}
&
\includegraphics[width=0.30\textwidth]{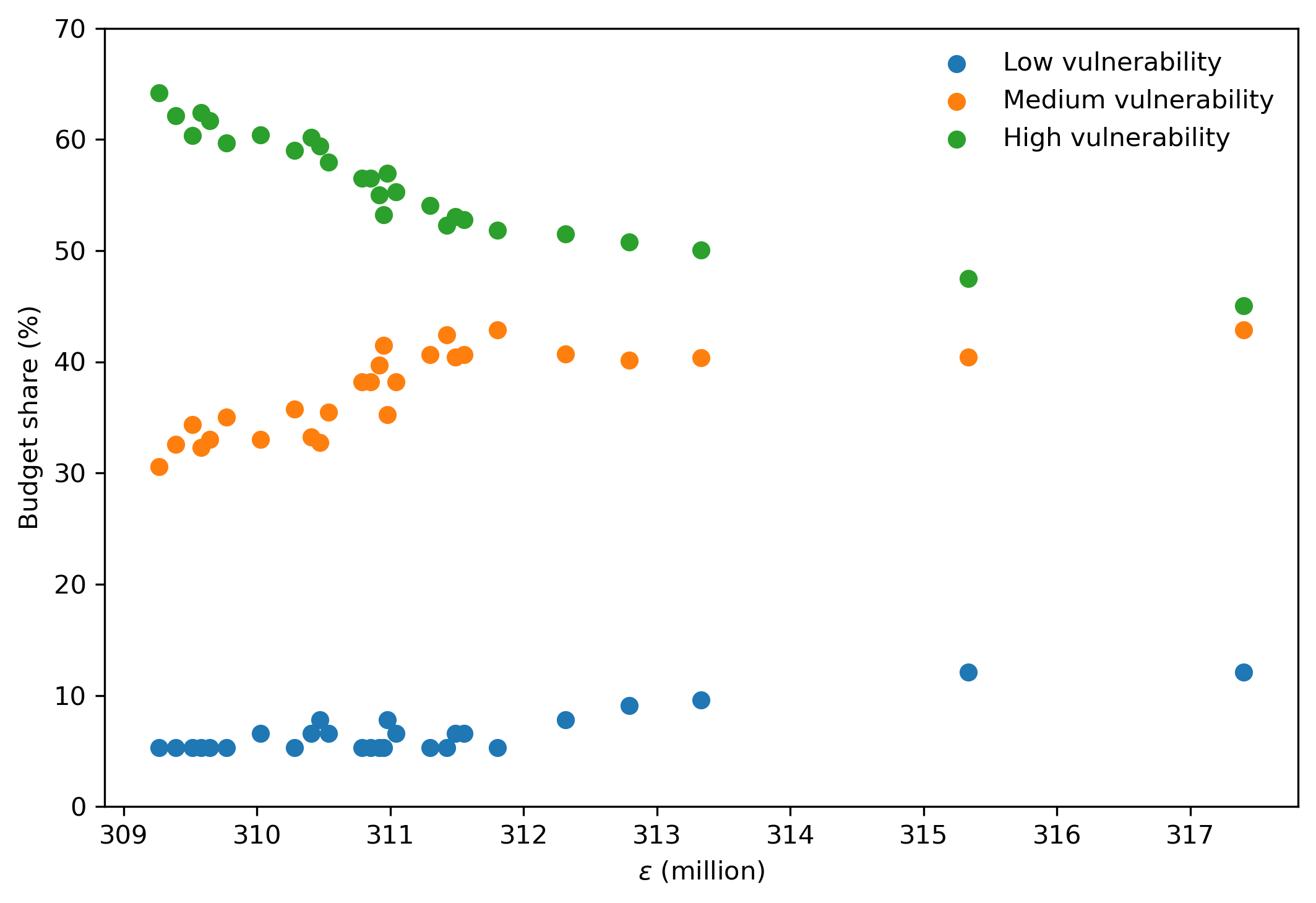}
&
\includegraphics[width=0.30\textwidth]{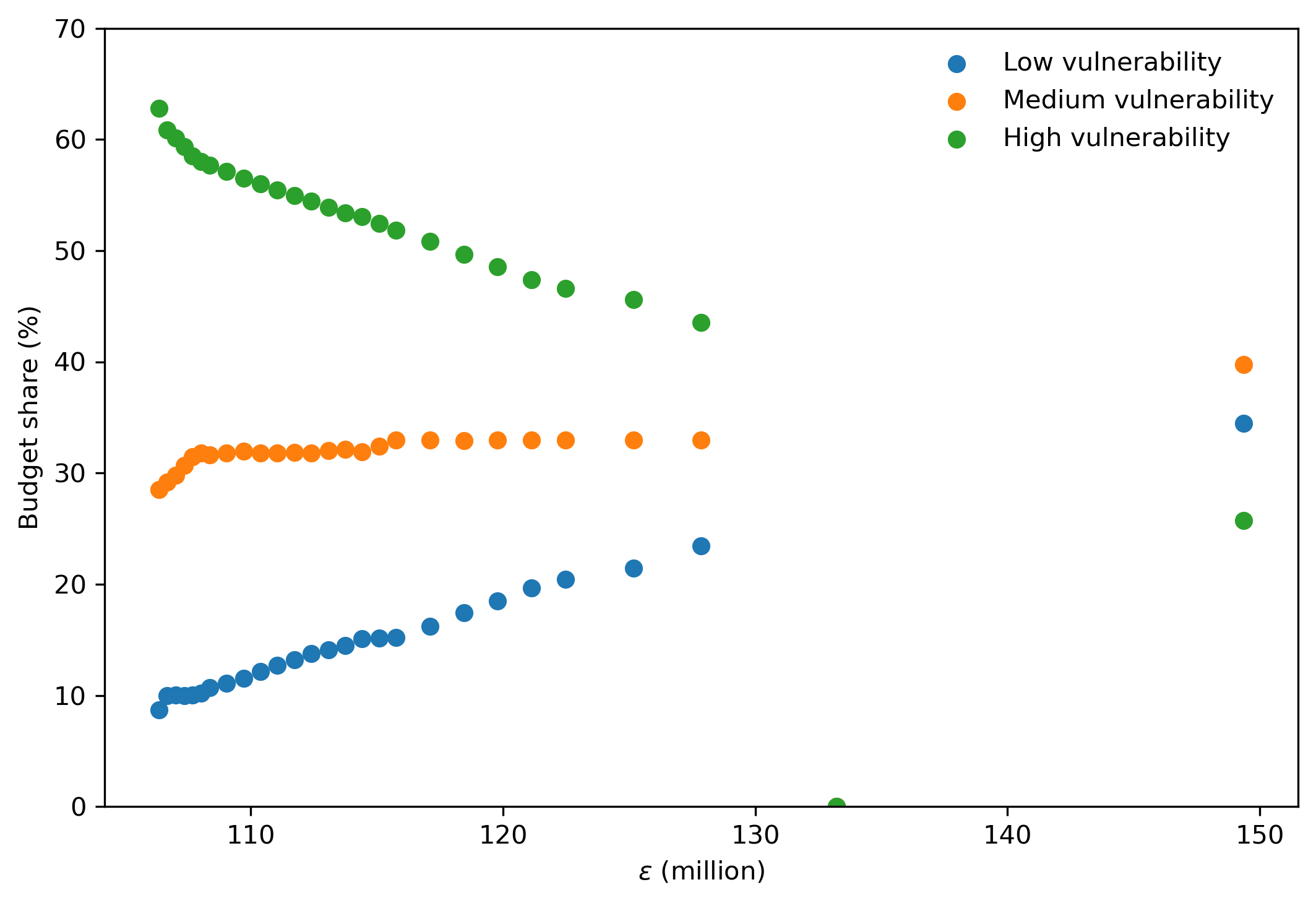}
&
\includegraphics[width=0.30\textwidth]{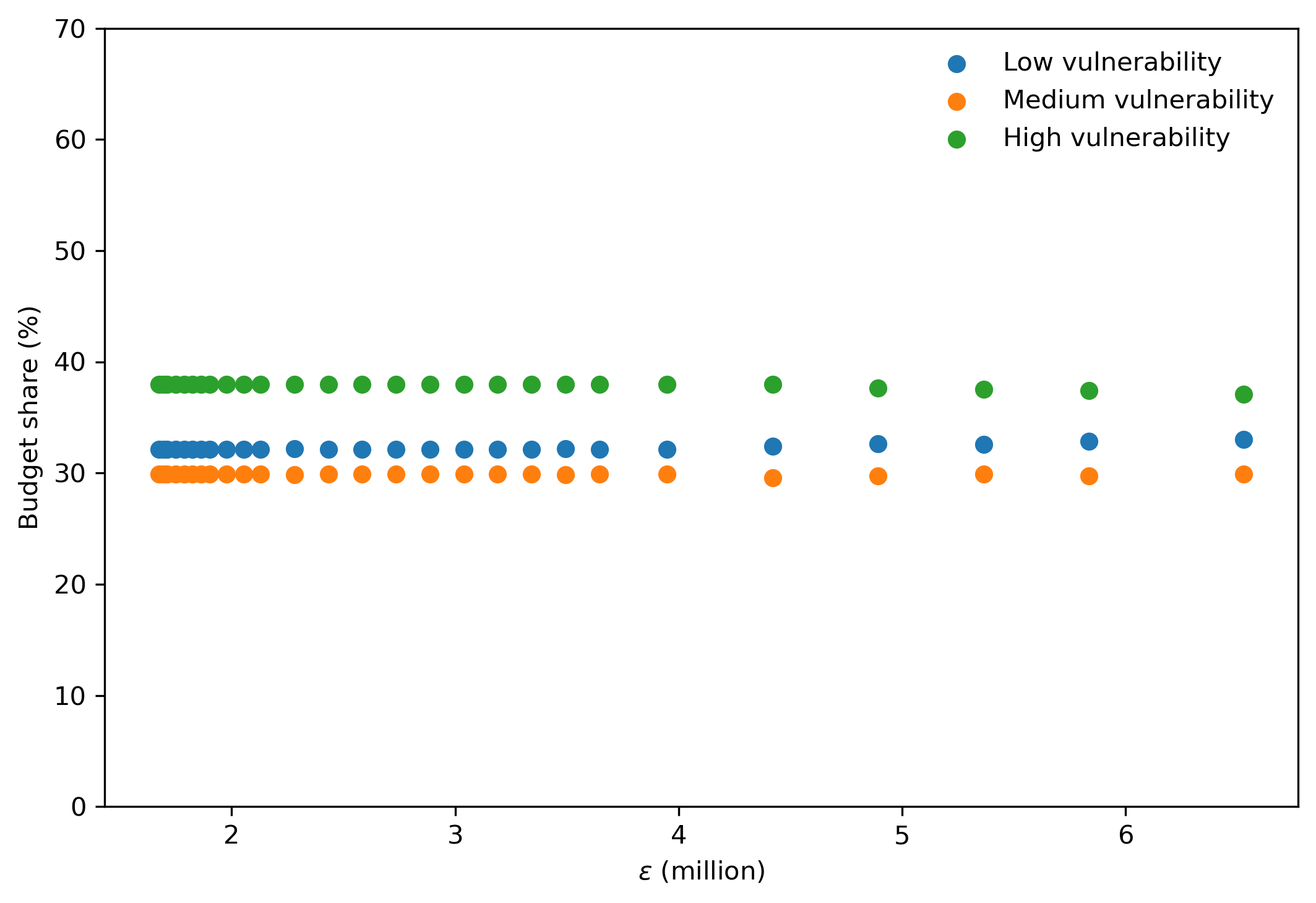}
\end{tabular}

\caption{Share of the hardening budget allocated to facilities located in
low-, medium-, and high-SVI census tracts. Moving from right to left
within each panel corresponds to a tighter service-disruption impact constraint and greater emphasis on reducing $f_2$.}
\label{fig:budget_share_patterns}
\end{figure}

Under scarce budgets, protection is concentrated more deeply at hospitals. As the budget expands, protection extends to more nursing homes, and impact-focused policies redirect part of this investment toward nursing
homes in high-vulnerability communities. Near full hardening, vulnerability- and facility-type differences largely disappear because almost all exposed
facilities can be protected. This sequencing is especially visible at the
\$300M knee: 78.26\% of exposed hospitals, but only 35.35\% of exposed nursing
homes, receive protection. Nevertheless, nursing homes absorb 58.15\% of the
budget because they constitute 99 of the 122 exposed facilities. Detailed
results by vulnerability group and facility type are provided in the
Supplementary Material.

\subsection{Value of stochastic planning}
\label{sec:VSS}

We compare the stochastic recourse problem with a deterministic
expected-value (EV) approximation in which scenario-specific flood depths are
replaced by their probability-weighted averages. Let $y^{EV}$ denote the resulting
hardening policy, $z_{f_1}^{EEV}(B,\varepsilon)$ its expected economic cost
when evaluated over the original scenario set, and
$z_{f_1}^{RP}(B,\varepsilon)$ the optimal value of the stochastic recourse problem. The value of the stochastic solution is
$\mathrm{VSS}(B,\varepsilon)
=
z_{f_1}^{EEV}(B,\varepsilon)
-
z_{f_1}^{RP}(B,\varepsilon)$.

 As shown in
Table~\ref{tbl:stochastic_value}, the resulting VSS increases from \$209.83M
at the \$50M budget to \$755.30M at \$600M. At \$50M, the comparison is made
at a common binding $\varepsilon$ threshold. At \$300M and \$600M, the EV-derived threshold lies above the entire stochastic Pareto frontier because the EV policy cannot attain the lower $f_2$ values achieved by the stochastic solutions. The impact constraint is
therefore nonbinding, and the stochastic benchmark reduces to the cost-minimizing solution.

\begin{table}[H]
\centering
\small
\setlength{\tabcolsep}{5pt}
\caption{Value of stochastic planning and perfect information. Monetary
values are reported in \$M. VSS is evaluated at
$\varepsilon=314.75$M, and EVPI is evaluated at the cost-minimizing endpoint.}
\label{tbl:stochastic_value}
\begin{tabular}{crrrr}
\toprule
Budget
& VSS
& VSS (\%)
& EVPI
& EVPI (\%) \\
\midrule
\$50M  & 209.83 & 27.41 & 121.48 & 21.89 \\
\$300M & 563.63 & 73.62 &  21.74 & 10.77 \\
\$600M & 755.30 & 98.65 &  10.31 & 100.00 \\
\bottomrule
\end{tabular}
\end{table}

We also compute the expected value of perfect information (EVPI) at the
cost-minimizing endpoint:
\[
\mathrm{EVPI}_{f_1}(B)
=
z_{f_1}^{RP}(B)-z_{f_1}^{WS}(B),
\]
where $z_{f_1}^{WS}(B)$ is the probability-weighted cost obtained by optimizing
separately for each realized scenario. Perfect information reduces expected
cost by \$121.48M at \$50M and \$21.74M at \$300M. At \$600M, it eliminates
the remaining \$10.31M in expected cost. Thus, scenario-based planning avoids
the substantial underinvestment produced by average-depth planning, while
perfect foresight provides a separate additional benefit.

\subsection{Algorithmic performance across scenario sets}
\label{sec:algorithmic_performance}

Table \ref{tbl:algorithm_summary} compares total runtime of extensive, Benders, and the LD methods over the 25 common interior $\varepsilon$-points.

\begin{table}[!t]
\centering
\small
\setlength{\tabcolsep}{5pt}
\caption{Total runtime in minutes for the 25 interior $\varepsilon$-points.
OOM$(n)$ indicates that $n$ points terminated because of insufficient memory.}
\label{tbl:algorithm_summary}
\begin{tabular}{llrrr}
\toprule
Scenario set & Budget & Extensive & Benders & LD \\
\midrule
\multirow{3}{*}{$\Smain$}
& \$50M  & 235       & 26  & 11 \\
& \$300M & OOM (2)   & 55  & 12 \\
& \$600M & OOM (2)   & 28  & 15 \\
\midrule
\multirow{3}{*}{$\Spercentile$ ($|\mathcal{S}|=81$)}
& \$50M  & OOM (25) & 83  & 28 \\
& \$300M & OOM (25) & 142 & 25 \\
& \$600M & OOM (25) & 107 & 33 \\
\bottomrule
\end{tabular}
\end{table}

The extensive approach is the most computationally demanding and becomes
memory-limited as the instance size grows. Benders solves every instance
exactly, requiring 26--55 minutes under $\Smain$ and 83--142 minutes under
$\Spercentile$. LD is consistently fastest, completing the corresponding
frontiers in 11--15 and 25--33 minutes, respectively. Thus, tripling the
scenario count increases runtime for both decomposition methods, but they
remain tractable, whereas the extensive formulation fails at every
$\Spercentile$ point.

Computational difficulty is not uniform across the frontier. For the exact
methods, the \$300M instances are generally the most demanding, particularly
near tight impact thresholds and the knee region, where small changes in
$\varepsilon$ can induce different hardening configurations. Complete per-point runtime
statistics are provided in the Supplementary Material.

Table~\ref{tbl:ld_gap_performance} summarizes the relative duality gaps
obtained by LD over the 25 interior $\varepsilon$-points. At the \$50M and
\$300M budgets, LD performs consistently well across both scenario sets:
mean gaps range from $0.25\%$ to $0.34\%$, and maximum gaps remain below
$1.5\%$.

\begin{table}[!t]
\centering
\small
\setlength{\tabcolsep}{6pt}
\renewcommand{\arraystretch}{1.1}
\caption{Relative duality-gap statistics for the Lagrangian-dual method}
\label{tbl:ld_gap_performance}
\begin{tabular}{cccc}
\toprule
\textbf{Budget}
& \begin{tabular}{c}Mean gap\\(\%)\end{tabular}
& \begin{tabular}{c}Median gap\\(\%)\end{tabular}
& \begin{tabular}{c}Maximum gap\\(\%)\end{tabular} \\
\midrule

\multicolumn{4}{c}{
\textit{$\mathcal{S}^{\text{main}}$ ($|\mathcal{S}|=27$)}} \\
\midrule
\$50M  & 0.34 & 0.35 & 0.61 \\
\$300M & 0.27 & 0.24 & 0.81 \\
\$600M & 2.31 & 0.99 & 6.74 \\

\midrule
\multicolumn{4}{c}{
\textit{$\mathcal{S}^{\text{perc}}$ ($|\mathcal{S}|=81$)}} \\
\midrule
\$50M  & 0.34 & 0.30 & 1.18 \\
\$300M & 0.25 & 0.13 & 1.47 \\
\$600M & 7.31 & 5.49 & 19.28 \\

\bottomrule
\end{tabular}

\vspace{1mm}
\begin{minipage}{0.95\textwidth}
\footnotesize
Notes: Statistics are computed over the 25 interior $\varepsilon$-points.
For each point, LD runs for at most 1000 iterations. The reported relative
duality gap is
$\frac{UB-LB}{\max\{1,UB\}}$, where $LB$ is the Lagrangian-dual
bound and $UB$ is the objective value of the best recovered feasible solution.
\end{minipage}
\end{table}

The reported gaps increase at the \$600M budget. At this budget, the attainable $f_2$ range is
narrow, and portions of the frontier are nearly flat. The primal-recovery
procedure may consequently retain the same feasible solution for several
neighboring $\varepsilon$-values. Thus, a
comparatively large reported bound gap does not necessarily imply that the
recovered feasible solution is equally far from the exact Pareto frontier.

To distinguish bound quality from primal-solution quality,
Table~\ref{tbl:ld_quality} compares representative recovered LD solutions with
the corresponding exact Pareto solutions. The deviations are defined as
\[
\mathrm{Dev}(f_i)
=
\frac{f_i^{\mathrm{feas}}-f_i^*}{f_i^*},
\qquad i\in\{1,2\}.
\]
The $f_2$ deviation is signed: a positive value indicates greater disruption
impact than the exact solution, whereas a negative value indicates that the
recovered solution over-satisfies the $\varepsilon$-constraint.

\begin{table}[!t]
\centering
\small
\setlength{\tabcolsep}{3pt}
\renewcommand{\arraystretch}{1.1}
\caption{Quality of dual bound and primal-recovery solutions relative to the exact Pareto frontier for representative interior $\varepsilon$-points.}
\label{tbl:ld_quality}
\begin{tabular}{lccccccccc}
\toprule
Budget 
& Policy
& $LB$
& $UB$
& $f_2^{\text{feas}}$
& $f_1^{*}$
& $f_2^{*}$
& $f_1$ Dev. (\%)
& $f_2$ Dev. (\%)
& Gap (\%) \\
\midrule

\multicolumn{10}{c}{\textit{$\mathcal{S}^{\text{main}}$ ($|\mathcal{S}|=27$)}} \\
\midrule

\multirow{3}{*}{\$50M}
& First interior & 568.21 & 568.99 & 309.40 & 568.65 & 309.38 & 0.06 & 0.01 & 0.14 \\
& Knee           & 557.80 & 559.44 & 311.69 & 559.05 & 311.70 & 0.07 & 0.00 & 0.29 \\
& Last interior  & 555.38 & 556.16 & 315.01 & 555.54 & 315.16 & 0.11 & -0.05 & 0.14 \\

\midrule
\multirow{3}{*}{\$300M}
& First interior & 234.30 & 235.86 & 106.49 & 234.50 & 106.68 & 0.58 & -0.18 & 0.66 \\
& Knee           & 211.11 & 211.84 & 119.76 & 211.19 & 119.78 & 0.31 & -0.01 & 0.34 \\
& Last interior  & 203.45 & 204.34 & 133.13 & 203.58 & 133.23 & 0.37 & -0.08 & 0.44 \\

\midrule
\multirow{3}{*}{\$600M}
& First interior & 13.02 & 13.64 & 1.69 & 13.26 & 1.69 & 2.68 & 0.00 & 4.53 \\
& Knee           & 10.28 & 10.34 & 3.33 & 10.31 & 3.34 & 0.27 & -0.35 & 0.51 \\
& Last interior  & 10.29 & 10.34 & 3.33 & 10.31 & 5.79 & 0.27 & -42.53 & 0.46 \\

\midrule
\multicolumn{10}{c}{\textit{$\mathcal{S}^{\text{perc}}$ ($|\mathcal{S}|=81$)}} \\
\midrule

\multirow{3}{*}{\$50M}
& First interior & 592.87 & 596.71 & 324.09 & 596.43 & 323.94 & 0.05 & 0.05 & 0.64 \\
& Knee           & 583.62 & 586.36 & 325.91 & 584.76 & 326.03 & 0.27 & -0.04 & 0.47 \\
& Last interior  & 580.87 & 582.15 & 327.73 & 581.25 & 330.19 & 0.15 & -0.74 & 0.22 \\

\midrule
\multirow{3}{*}{\$300M}
& First interior & 273.20 & 274.85 & 133.98 & 273.42 & 134.03 & 0.53 & -0.04 & 0.60 \\
& Knee           & 249.18 & 250.79 & 144.40 & 249.34 & 144.41 & 0.58 & -0.01 & 0.64 \\
& Last interior  & 241.89 & 242.20 & 159.43 & 241.94 & 162.09 & 0.11 & -1.64 & 0.13 \\

\midrule
\multirow{3}{*}{\$600M}
& First interior & 100.88 & 109.00 & 39.38 & 106.14 & 39.38 & 2.69 & 0.00 & 7.46 \\
& Knee           & 87.27  & 88.05  & 40.25 & 87.79  & 40.25 & 0.30 & 0.00 & 0.89 \\
& Last interior  & 75.15  & 75.50  & 44.61 & 75.18  & 44.61 & 0.42 & 0.00 & 0.46 \\

\bottomrule
\end{tabular}

\vspace{2mm}
\begin{minipage}{\textwidth}
\footnotesize
Notes: $LB$ denotes the Lagrangian dual bound on the primary objective $f_1$ and
$UB=f_1^{\text{feas}}$ denotes the objective value of the recovered feasible solution.
Values are reported in millions.
\end{minipage}
\end{table}

Direct comparison with the exact frontier confirms that the reported duality
gap can be more conservative than the actual deviation of the recovered
solution. Across both scenario sets and all three budgets, the recovered LD
knee solutions deviate from the exact $f_1$ values by at most $0.58\%$ and
from the exact $f_2$ values by at most $0.35\%$ in absolute terms. Even at the
more difficult first-interior points for the \$600M budget, the reported gaps
are $4.53\%$ and $7.46\%$, whereas the corresponding $f_1$ deviations are only
$2.68\%$ and $2.69\%$, respectively, and the $f_2$ values match the exact
solutions to the reported precision.

The large negative $f_2$ deviation of $-42.53\%$ at the last interior point
under $\mathcal{S}^{\text{main}}$ and the \$600M budget does not indicate poor
feasibility. In this nearly flat region of the frontier, a negligible increase
in $f_1$ can accompany a substantial reduction in $f_2$. The recovered
solution has an $f_1$ value only $0.27\%$ above that of the corresponding exact
Pareto point while achieving substantially lower disruption impact. The
negative deviation therefore reflects over-satisfaction of the impact
constraint rather than poor primal-recovery performance. Taken together, these comparisons show that LD recovers high-quality feasible
solutions at representative points across the frontier, with particularly
close agreement in the policy-relevant knee region, even when its reported
duality gap provides a conservative assessment of primal-solution quality.

Benders is therefore the preferred method when exact optimality is required, whereas LD provides a substantially faster approximation for large instances and repeated analyses.

%% file: 06_conclusion.tex
\section{Conclusion}
\label{sec:conclusion}

This study develops a bi-objective two-stage stochastic optimization framework
for facility resilience planning under uncertain flood exposure. The model
jointly determines proactive hardening investments and scenario-dependent
evacuation decisions while balancing expected economic loss against service disruption impact. An exact multi-cut Benders decomposition exploits the transportation
structure of the evacuation-flow subproblems after retaining the discrete
recourse decisions in the master, while a Lagrangian-dual method with
tailored primal recovery provides a scalable approximation for larger
instances. An adaptive $\varepsilon$-constraint
procedure supports systematic exploration of the resulting Pareto frontier.

The Texas healthcare case study shows that resilience planning based only on
economic loss can produce materially different investment portfolios from
planning that also limits care disruption impact. Relatively small movements away
from the economic-loss-minimizing endpoint can yield substantial reductions in disruption impact. In this case study, facility capacities remain central drivers of
prioritization, while tighter impact limits shift protection toward
facilities in more socially vulnerable communities. The analysis also
shows that mitigation benefits are strongly front-loaded and that
deterministic average flood depth planning substantially understates the protection needed given the modeled scenario variability.

The computational results establish the practical value of decomposition. Solving the model with a commercial solver becomes memory-limited as the number of scenarios
and evacuation arcs grows, whereas Benders solves all tested instances exactly. The Lagrangian-dual method is substantially faster and recovers solutions close to the exact frontier, making it
well suited to large-scale analysis and repeated planning studies.
Benders remains preferable when strict optimality guarantees are required.

Several limitations suggest natural extensions. First, the model assumes a
single decision-maker with system-wide authority and therefore abstracts from
the strategic interactions, heterogeneous incentives, and cost-sharing
arrangements that arise among public agencies, healthcare providers, insurers,
and facility operators. A game-theoretic multi-agent formulation could capture
these decentralized decisions and the resulting coordination challenges.
Second, the analysis assumes no pre-existing protection and represents flood
exposure beyond the installed protection level as complete facility shutdown.
Future models could incorporate existing defenses, partial operability, and
phased restoration. Further extensions could include dynamic investment, infrastructure interdependencies, and endogenous recovery resources.

Although the empirical application focuses on healthcare resilience in Texas,
the framework applies more broadly to geographically distributed facility
systems in which disrupted sites generate pickup, transfer, or delivery
requirements and receiving sites have limited capacity. Examples include emergency shelters, humanitarian
logistics networks, warehouses, repair depots, public-service facilities, and other critical infrastructure systems. More generally, the study demonstrates
how multi-objective stochastic optimization can identify resilience strategies that are computationally tractable, economically efficient, and attentive to the distribution of service disruption impact across affected communities.

%% file: 7_supp.tex
\section{Additional methodological details}

\subsection{Lagrangian-dual method}
\label{sec:supp_ld_details}

This section provides pseudocodes for the Lagrangian-dual method described in the main manuscript.

\subsubsection{Complete primal-recovery routine}
\label{sec:supp_ld_primal}

Algorithm~\ref{alg:PrimalRecovery} summarizes the overall primal recovery routine, integrating \textsc{BudgetRepair}, \textsc{EpsilonRepair}, and \textsc{GreedyTransport}.

\begin{breakablealgorithm}
\caption{\textsc{BudgetRepair}$(y,B)$}
\label{alg:BudgetRepair}
\begin{algorithmic}[1]
\STATE \textbf{Input:} hardening plan $y$, budget $B$
\STATE \textbf{Output:} budget-feasible hardening plan $\hat y$

\STATE $\hat y \gets y$
\STATE $C \gets \sum_{j\in J^{\mathrm{send}}} C_j^H \hat y_j$
\IF{$C \le B$}
    \STATE \textbf{return} $\hat y$
\ENDIF

\STATE Initialize min-heap $\mathcal H_{\mathrm{rem}}$
\FOR{each $j\in J^{\mathrm{send}}$ with $\hat y_j\ge 1$}
    \STATE $\ell \gets \hat y_j$
    \STATE Compute removal score $\rho_j(\ell)$
    \STATE Push key $\rho_j(\ell)$ with index $(j,\ell)$
\ENDFOR

\WHILE{$C>B$}
    \STATE Extract $(j,\ell)$ with smallest key
    \IF{$\ell \neq \hat y_j$}
        \STATE \textbf{continue}
    \ENDIF
    \STATE $\hat y_j \gets \hat y_j - 1$
    \STATE $C \gets C - C_j^H$
    \IF{$\hat y_j \ge 1$}
        \STATE $\ell \gets \hat y_j$
        \STATE Compute updated removal score $\rho_j(\ell)$
        \STATE Push updated key into $\mathcal H_{\mathrm{rem}}$
    \ENDIF
\ENDWHILE

\STATE \textbf{return} $\hat y$
\end{algorithmic}
\end{breakablealgorithm}

\begin{breakablealgorithm}
\caption{\textsc{EpsilonRepair}$(y,B,\varepsilon)$}
\label{alg:EpsilonRepair}
\begin{algorithmic}[1]
\STATE \textbf{Input:} budget-feasible hardening plan $y$, budget $B$, threshold $\varepsilon$
\STATE \textbf{Output:} $\tilde y$ with $f_2(\tilde y)\le \varepsilon$, or \textsc{Fail}

\STATE $\tilde y \gets y$
\STATE $C \gets \sum_{j\in J^{\mathrm{send}}} C_j^H \tilde y_j$
\STATE $I \gets f_2(\tilde y)$
\IF{$I \le \varepsilon$}
    \STATE \textbf{return} $\tilde y$
\ENDIF

\STATE Initialize max-heap $\mathcal H_{\mathrm{add}}$
\FOR{each $j\in J^{\mathrm{send}}$ with $\tilde y_j < H_j^{\max}$}
    \STATE $\ell \gets \tilde y_j + 1$
    \STATE Compute addition score $\rho_j(\ell)$
    \STATE Push key $\rho_j(\ell)$ with index $(j,\ell)$
\ENDFOR

\WHILE{$I>\varepsilon$}
    \IF{$\mathcal H_{\mathrm{add}}$ empty}
        \STATE \textbf{return Fail}
    \ENDIF
    \STATE Extract $(j,\ell)$ with largest key
    \IF{$\ell \neq \tilde y_j+1$}
        \STATE \textbf{continue}
    \ENDIF
    \IF{$C + C_j^H > B$}
        \STATE \textbf{continue}
    \ENDIF
    \STATE $\tilde y_j \gets \tilde y_j + 1$
    \STATE $C \gets C + C_j^H$
    \STATE $I \gets I - \Delta_j^{\mathrm{imp}}(\tilde y_j)$
    \IF{$\tilde y_j < H_j^{\max}$}
        \STATE $\ell \gets \tilde y_j + 1$
        \STATE Compute updated addition score $\rho_j(\ell)$
        \STATE Push updated key into $\mathcal H_{\mathrm{add}}$
    \ENDIF
\ENDWHILE

\STATE \textbf{return} $\tilde y$
\end{algorithmic}
\end{breakablealgorithm}

\begin{breakablealgorithm}
\caption{\textsc{GreedyTransport}$(\bar\gamma)$}
\label{alg:GreedyTransport}
\begin{algorithmic}[1]
\STATE \textbf{Input:} evacuation indicators $\bar\gamma_{sj}$, demands $D_j$, capacities $A_k$, ordered candidate lists $K_{sj}$
\STATE \textbf{Output:} flows $\bar q$, or \textsc{Fail}

\FOR{each $s\in\mathcal S$}
    \STATE $\bar q_{sjk}\gets 0 \quad \forall (j,k)\in\tilde{\mathcal A}_s$
    \STATE $cap_k \gets A_k \quad \forall k\in\tilde J_s^{\mathrm{recv}}$
    \STATE $\mathcal J_s^{\mathrm{evac}} \gets \{j\in J^{\mathrm{send}}:\bar\gamma_{sj}=1\}$
    \STATE Sort \(\mathcal J_s^{\mathrm{evac}}\) by decreasing \(D_j\)
    \FOR{each $j\in\mathcal J_s^{\mathrm{evac}}$}
        \STATE $rem \gets D_j$
        \FOR{each $k\in K_{sj}$}
            \IF{$rem=0$}
                \STATE \textbf{break}
            \ENDIF
            \STATE $x \gets \min\{rem,cap_k\}$
            \STATE $\bar q_{sjk} \gets x$
            \STATE $cap_k \gets cap_k - x$
            \STATE $rem \gets rem - x$
        \ENDFOR
        \IF{$rem>0$}
            \STATE \textbf{return Fail}
        \ENDIF
    \ENDFOR
\ENDFOR
\STATE \textbf{return} $\bar q$
\end{algorithmic}
\end{breakablealgorithm}

\begin{breakablealgorithm}
\caption{\textsc{PrimalRecovery}$(y^t,B,\varepsilon)$}
\label{alg:PrimalRecovery}
\begin{algorithmic}[1]
\STATE \textbf{Input:} Lagrangian minimizer $y^t$, budget $B$, threshold $\varepsilon$
\STATE \textbf{Output:} feasible solution $(\bar y,\bar\gamma,\bar z,\bar q)$ and upper bound $UB^t$, or \textsc{Fail}

\STATE $\bar y \gets \textsc{BudgetRepair}(y^t,B)$ (Algorithm~\ref{alg:BudgetRepair})
\STATE $\bar y \gets \textsc{EpsilonRepair}(\bar y,B,\varepsilon)$ (Algorithm~\ref{alg:EpsilonRepair})
\IF{\textsc{EpsilonRepair} returns \textsc{Fail}}
    \STATE \textbf{return Fail}
\ENDIF

\FOR{each $(s,j)\in\mathcal S\times J^{\mathrm{send}}$}
    \STATE $\bar\gamma_{sj}\gets \mathbf{1}[F_{sj}>\bar y_j]$
    \STATE $\bar z_{sj}\gets \max\{F_{sj}-\bar y_j,0\}$
\ENDFOR

\STATE $\bar q \gets \textsc{GreedyTransport}(\bar\gamma)$ (Algorithm~\ref{alg:GreedyTransport})
\IF{\textsc{GreedyTransport} returns \textsc{Fail}}
    \STATE \textbf{return Fail}
\ENDIF

\STATE Compute
\[
UB^t
=
\sum_{s\in\mathcal S} P_s
\left(
\sum_{j\in J^{\mathrm{send}}} R_j \bar z_{sj}
+
\sum_{(j,k)\in\tilde{\mathcal A}_s} C^E_{jk}\bar q_{sjk}
\right)
\]
\STATE \textbf{return} $(\bar y,\bar\gamma,\bar z,\bar q,UB^t)$
\end{algorithmic}
\end{breakablealgorithm}

\subsubsection{Constructive heuristic for the first LD iteration}

\begin{breakablealgorithm}
\caption{\textsc{InitialFeasiblePoint}$(B,\varepsilon)$}
\label{alg:InitialFeasiblePoint}
\begin{algorithmic}[1]
\STATE \textbf{Input:} budget $B$, threshold $\varepsilon$, hardening costs $C_j^H$, limits $H_j^{\max}$, marginal gains $\Delta_j^{\mathrm{imp}}(\ell)$ and $\Delta_j^{\mathrm{rest}}(\ell)$
\STATE \textbf{Output:} feasible hardening plan $y$, or \textsc{Fail}

\STATE $y_j \gets 0 \quad \forall j\in J^{\mathrm{send}}$
\STATE $C \gets 0 = \sum_{j\in J^{\mathrm{send}}} C_j^H y_j$
\STATE $I \gets f_2(y)=\sum_{s\in\mathcal S} P_s \sum_{j\in J^{\mathrm{send}}} V_j(F_{sj}-y_j)_+ $
\IF{$I \le \varepsilon$}
    \STATE \textbf{return} $y$
\ENDIF

\STATE Initialize max-heap $\mathcal H_{\mathrm{add}}$
\FOR{each $j\in J^{\mathrm{send}}$ with $H_j^{\max}\ge 1$}
    \STATE $\ell \gets 1$
    \STATE Push key $\big(\Delta_j^{\mathrm{imp}}(\ell)/C_j^H,\; \Delta_j^{\mathrm{rest}}(\ell)/C_j^H\big)$ with index $(j,\ell)$
\ENDFOR

\WHILE{$I>\varepsilon$}
    \IF{$\mathcal H_{\mathrm{add}}$ empty}
        \STATE \textbf{return Fail}
    \ENDIF
    \STATE Extract facility-level pair $(j,\ell)$ with largest key
    \IF{$\ell \neq y_j+1$}
        \STATE \textbf{continue}
    \ENDIF
    \IF{$C + C_j^H > B$}
        \STATE \textbf{continue}
    \ENDIF
    \STATE $y_j \gets y_j + 1$
    \STATE $C \gets C + C_j^H$
    \STATE $I \gets I - \Delta_j^{\mathrm{imp}}(y_j)$
    \IF{$y_j < H_j^{\max}$}
        \STATE $\ell \gets y_j + 1$
        \STATE Push updated key for $(j,\ell)$ into $\mathcal H_{\mathrm{add}}$
    \ENDIF
\ENDWHILE

\STATE \textbf{return} $y$
\end{algorithmic}
\end{breakablealgorithm}

\subsubsection{Complete LD algorithm}

\begin{breakablealgorithm}
\caption{Lagrangian Dual Frontier Sweep with Warm Starts and Primal Recovery}
\label{alg:LD_polyak_eps}
\begin{algorithmic}[1]
\STATE \textbf{Input:} model parameters, $B$, $\mathrm{gap\_tol}$, ordered frontier values $\varepsilon_1<\cdots<\varepsilon_{\mathcal{M}}$
\STATE \textbf{Output:} for each $\varepsilon_{\hat{m}}$, lower bound $LB_{\hat{m}}$, upper bound $UB_{\hat{m}}$, and incumbent solution

\STATE Precompute restricted sets $K_{sj}$, $\tilde J_s^{\mathrm{recv}}$, $\tilde{\mathcal A}_s$, and scenario orderings
\STATE Initialize warm-start data $(\mu^{\mathrm{ws}},\pi^{\mathrm{ws}},\nu^{\mathrm{ws}},y^{\mathrm{feas}})\gets (0,0,0,\varnothing)$

\FOR{$\hat{m}=1,\dots,M$}
    \vspace{-2pt}
    \STATE $\varepsilon \gets \varepsilon_{\hat{m}}$
    \STATE $LB\gets -\infty$, \quad $UB\gets +\infty$

    \IF{$\hat{m}=1$ and $y^{\mathrm{feas}}=\varnothing$}
        \STATE Initialize $(\mu^0,\pi^0,\nu^0)\gets (0,0,0)$
        \STATE Run \textsc{InitialFeasiblePoint}$(B,\varepsilon)$ 
        \IF{successful}
            \STATE Evaluate its objective and update $UB$ and set incumbent feasible solution
        \ENDIF
        \vspace{-3pt}
    \ELSE
        \STATE Warm start multipliers $(\mu^0,\pi^0,\nu^0)\gets (\mu^{\mathrm{ws}},\pi^{\mathrm{ws}},\nu^{\mathrm{ws}})$
        \IF{$y^{\mathrm{feas}}\neq\varnothing$}
            \STATE Initialize incumbent using $y^{\mathrm{feas}}$
        \ENDIF
        \vspace{-3pt}
    \ENDIF

    \FOR{$t=0,\dots,T-1$}
        \STATE Apply \textsc{EnumerateHardening}$(\mu^t,\pi^t,\nu^t)$ and obtain $(y^t,\gamma^t,z^t,q^t)$ and $L^t$
        \STATE $LB\gets \max\{LB,L^t\}$
        \STATE Apply \textsc{PrimalRecovery}$(y^t,B,\varepsilon)$ 
        \IF{successful and $UB^t<UB$}
            \STATE Update incumbent and set $UB\gets UB^t$
        \ENDIF
        \IF{$\dfrac{UB-LB}{\max\{1,\,UB\}} \le \mathrm{gap\_tol}$}
            \STATE \textbf{break}
        \ENDIF
        \STATE Compute subgradients $(g_\mu^t,g_\pi^t,g_\nu^t)$ and step size $\eta^t$
        \STATE Update $\mu^{t+1}$, $\pi^{t+1}$, and $\nu^{t+1}$
    \ENDFOR

    \STATE Store $(LB,UB)$ and the incumbent solution for $\varepsilon_{\hat{m}}$
    \STATE Set warm-start data $(\mu^{\mathrm{ws}},\pi^{\mathrm{ws}},\nu^{\mathrm{ws}})\gets (\mu^t,\pi^t,\nu^t)$
    \STATE Update $y^{\mathrm{feas}}$ with the best feasible hardening plan found for $\varepsilon_{\hat{m}}$, if any
\ENDFOR

\STATE \textbf{return} all $(LB_{\hat{m}},UB_{\hat{m}})$ values and incumbent solutions along the frontier
\end{algorithmic}
\end{breakablealgorithm}

\section{Case-study data and parameter construction}
\label{sec:supp_case_study}

\subsubsection{Historical tropical-cyclone events}
\label{sec:supp_tc_events}

Table~\ref{tbl:supp_tc_events} lists the 27 historical tropical cyclones used to construct the scenario sets.

\begin{table}[H]
\centering
\caption{Historical tropical-cyclone events included in the scenario sets
\citep{Tabassum2024Dissertation}}
\label{tbl:supp_tc_events}
\small
\begin{tabular}{lll}
\toprule
Storm name & Maximum intensity & Date range\\
\midrule
Harvey & Category 4 Hurricane & Aug. 24, 2017--Sep. 02, 2017\\
Cindy & Tropical Storm & Jun. 19, 2017--Jun. 28, 2017\\
Bill & Tropical Storm & Jun. 16, 2015--Jun. 25, 2015\\
Isaac & Category 1 Hurricane & Aug. 27, 2012--Sep. 05, 2012\\
Don & Tropical Storm & Jul. 27, 2011--Aug. 08, 2011\\
Hermine & Tropical Storm & Sep. 04, 2010--Sep. 13, 2010\\
Two & Tropical Depression & Jul. 07, 2010--Jul. 16, 2010\\
Ike & Category 4 Hurricane & Sep. 10, 2008--Sep. 19, 2008\\
Gustav & Category 4 Hurricane & Sep. 01, 2008--Sep. 10, 2008\\
Edouard & Tropical Storm & Aug. 03, 2008--Aug. 12, 2008\\
Dolly & Category 2 Hurricane & Jul. 20, 2008--Jul. 29, 2008\\
Humberto & Category 1 Hurricane & Sep. 12, 2007--Sep. 21, 2007\\
Erin & Tropical Storm & Aug. 15, 2007--Aug. 24, 2007\\
Rita & Category 5 Hurricane & Sep. 18, 2005--Sep. 27, 2005\\
Ivan & Category 5 Hurricane & Sep. 20, 2004--Sep. 29, 2004\\
Grace & Tropical Storm & Aug. 30, 2003--Sep. 08, 2003\\
Erika & Category 1 Hurricane & Aug. 14, 2003--Aug. 23, 2003\\
Claudette & Category 1 Hurricane & Jul. 08, 2003--Jul. 17, 2003\\
Fay & Tropical Storm & Sep. 05, 2002--Sep. 14, 2002\\
Bertha & Tropical Storm & Aug. 04, 2002--Aug. 13, 2002\\
Allison & Tropical Storm & Jun. 05, 2001--Jun. 14, 2001\\
Unnamed & Tropical Depression & Sep. 08, 2000--Sep. 17, 2000\\
Frances & Tropical Storm & Sep. 08, 1998--Sep. 17, 1998\\
Charley & Tropical Storm & Aug. 21, 1998--Aug. 30, 1998\\
Dean & Tropical Storm & Jul. 28, 1995--Aug. 06, 1995\\
Lidia & Category 4 Hurricane & Sep. 08, 1993--Sep. 17, 1993\\
Arlene & Tropical Storm & Jun. 18, 1993--Jun. 27, 1993\\
\bottomrule
\end{tabular}
\end{table}

\subsection{Parameter construction details}
\label{sec:supp_parameter_construction}

This section documents the intermediate assumptions and calculations underlying the parameter values reported in the main manuscript.

\subsubsection{Facility geometry and hardening cost}
\label{sec:supp_hardening_cost}

Facility-specific perimeter and area data are unavailable. We therefore
estimate the effective perimeter $X_j$ and area $\varphi_j$ from bed capacity
using building-area statistics reported by \citet{definitivehc_sqft}.
For each bed-capacity category, the implied perimeter is computed assuming a
square footprint,
\[
P=4\sqrt{A},
\]
which provides a lower-bound approximation among rectangular building
footprints.

\begin{table}[!t]
\centering
\caption{Facility-area and implied-perimeter inputs by bed-capacity category}
\label{tab:supp_perimeter_from_area}
\small
\begin{tabular}{lccc}
\toprule
Bed-capacity bin
& Mean area (ft$^2$)
& Implied perimeter (ft)
& Perimeter-to-bed ratio \\
\midrule
$<25$        & 78,962      & 1,124 & 90.0 \\
$25$--$49$   & 111,779     & 1,337 & 36.2 \\
$50$--$99$   & 183,328     & 1,713 & 22.8 \\
$100$--$249$ & 430,060     & 2,623 & 15.0 \\
$250$--$499$ & 909,558     & 3,815 & 10.2 \\
$500$--$999$ & 1,868,317   & 5,468 & 7.3 \\
$1,000+$     & 3,264,601   & 7,227 & 7.2 \\
\bottomrule
\end{tabular}
\end{table}

Let $0=b_0<b_1<\cdots<b_K$ denote the bed-capacity breakpoints and let
$r_k$ be the perimeter-to-bed ratio for category $k$. We estimate
\[
X_j
=
\sum_{k=1}^{K}
r_k
\max\{\min\{O_j,b_k\}-b_{k-1},0\}.
\]
Facility area $\varphi_j$ is calculated analogously using the corresponding
area-to-bed ratios. These piecewise-linear mappings are increasing and
concave, representing diminishing footprint growth as facility size
increases.

The hardening coefficient is $C_j^H=uX_j$. FEMA planning guidance reports a
wide range of floodwall construction costs
\citep{SCCOG_critical_facilities}; we use
$u=\$200$ per linear foot per foot of wall height.

As an order-of-magnitude check, the Lourdes Hospital floodwall project cost
approximately \$9.45 million in 2024 dollars for an estimated perimeter of
5,750 ft and a maximum height of 11 ft
\citep{fema_advanced_2012}, implying
\[
u_{\mathrm{Lourdes}}
=
\frac{9{,}450{,}000}{5{,}750\times11}
\approx \$149.
\]

\subsubsection{Restoration and business interruption coefficients}
\label{sec:supp_restoration_cost}

Physical-damage coefficients are derived from the most-likely U.S. Army
Corps of Engineers depth--damage curves for hospitals and nursing homes
\citep{naccs_depth_damage}. Because facility-specific finished-floor
elevations are unavailable, we assume that site flood depth represents
interior inundation depth.

For facility type $t$ and damage component
$c\in\{\mathrm{str},\mathrm{con}\}$, where $\mathrm{str}$ and $\mathrm{con}$ denote structure and contents damage, respectively, the depth--damage observations
$d_c^t(z)$ at positive inundation depths are approximated by a line through
the origin:
\[
d_c^t(z)\approx\beta_c^t z,
\qquad
\beta_c^t
=
\frac{\sum_z z\,d_c^t(z)}
{\sum_z z^2}.
\]
The fitted marginal damage fractions are
\[
\beta_{\mathrm{str}}^H=0.0566,
\qquad
\beta_{\mathrm{con}}^H=0.0704,
\]
and
\[
\beta_{\mathrm{str}}^N=0.0618,
\qquad
\beta_{\mathrm{con}}^N=0.0859.
\]

Using Hazus structure and contents replacement values
\citep{hazus_inventory_tm},
\[
r_{\mathrm{phys}}^t
=
\beta_{\mathrm{str}}^t v_{\mathrm{str}}^t
+
\beta_{\mathrm{con}}^t v_{\mathrm{con}}^t,
\]
which gives
\[
r_{\mathrm{phys}}^H=\$63.7,
\qquad
r_{\mathrm{phys}}^N=\$27.4
\quad
(\text{USD/ft}^2/\text{ft}).
\]

Following the Hazus business-interruption assumptions
\citep{hazus_flood_tm,hazus_inventory_tm}, the effective daily loss rate is
\[
\pi_{\mathrm{BI}}
=
(1-0.60)(0.221+0.519)
=
\$0.296
\quad
(\text{USD/ft}^2/\text{day}).
\]
The Hazus stepwise restoration-time data are approximated by
\[
RT^t(z)\approx\alpha^t z,
\]
using an origin-constrained least-squares fit over depths 0--24 ft. This gives
\[
\alpha^H=53.4,
\qquad
\alpha^N=51.3
\quad
(\text{days/ft}).
\]
Therefore,
\[
r_{\mathrm{BI}}^t=\pi_{\mathrm{BI}}\alpha^t,
\]
yielding
\[
r_{\mathrm{BI}}^H=\$15.8,
\qquad
r_{\mathrm{BI}}^N=\$15.2
\quad
(\text{USD/ft}^2/\text{ft}).
\]
Combining the two components gives
\[
R_j
=
\begin{cases}
79.5\,\varphi_j, & t(j)=H,\\
42.6\,\varphi_j, & t(j)=N.
\end{cases}
\]

\subsubsection{Evacuation-cost construction}
\label{sec:supp_evacuation_cost}

We assume one ambulance trip per patient and a two-person crew. The 2019 City
of Houston EMS mileage charge of \$15 per mile is converted to 2024 dollars
using an inflation factor of 1.23:
\[
c_{\mathrm{mile}}
=
15(1.23)
=
\$18.45\text{/mile}.
\]
Using a 2019 mean hourly wage of \$18.67 for emergency medical technicians
and paramedics, and assuming one hour of crew time per patient transfer,
\[
c_{\mathrm{labor}}
=
2(18.67)(1.23)
\approx \$45.9.
\]
The resulting aggregate evacuation cost is
\[
C_{jk}^E
=
c_{\mathrm{mile}}d_{jk}
+
c_{\mathrm{labor}}
=
18.45d_{jk}+45.9.
\]
The one-hour transfer assumption is consistent with related
disaster-evacuation studies \citep{KIM2024104518}.

\section{Supplementary computational and policy results}

\subsection{Budget-dependent Pareto endpoints}

Table~\ref{tbl:payoff_endpoints_by_budget} reports the Pareto endpoint values
and the break-even measure
\[
N(B)=\frac{B}{f_1(0)-f_1(B)},
\]
where $N(B)$ denotes the number of comparable events required for cumulative
expected economic-loss reductions to recover the hardening investment.

\begin{table}[!t]
\centering
\caption{Pareto extreme-point values and break-even event counts by budget.
All objective values are reported in $10^{6}$.}
\label{tbl:payoff_endpoints_by_budget}
\setlength{\tabcolsep}{4pt}
\renewcommand{\arraystretch}{1.05}
\begin{tabular}{ccccc|cc}
\toprule
\textbf{Budget}
& \multicolumn{2}{c}{$f_1$ endpoints (\$)}
& \multicolumn{2}{c}{$f_2$ endpoints}
& \multicolumn{2}{c}{$N(B)=\dfrac{B}{f_1(0)-f_1(B)}$} \\
\cmidrule(lr){2-3}
\cmidrule(lr){4-5}
\cmidrule(lr){6-7}
\textbf{\$}
& $f_1^{\min}$
& $f_1^{\max}$
& $f_2^{\min}$
& $f_2^{\max}$
& at $f_1^{\min}$
& at $f_1^{\max}$ \\
\midrule
0   & 1032.13 & 1032.13 & 607.62 & 607.62 & --    & --    \\
50  & 555.27  & 569.42  & 309.26 & 317.40 & 0.105 & 0.108 \\
100 & 431.20  & 463.77  & 250.95 & 269.48 & 0.166 & 0.176 \\
200 & 284.76  & 336.67  & 172.98 & 192.61 & 0.268 & 0.288 \\
300 & 201.98  & 240.25  & 106.35 & 149.36 & 0.361 & 0.379 \\
400 & 133.08  & 167.46  & 57.02  & 97.62  & 0.445 & 0.463 \\
500 & 67.85   & 94.08   & 23.31  & 53.98  & 0.519 & 0.533 \\
600 & 10.31   & 13.53   & 1.67   & 6.53   & 0.587 & 0.589 \\
619 & 0.00    & 0.00    & 0.00   & 0.00   & 0.600 & 0.600 \\
\bottomrule
\end{tabular}
\end{table}

\subsection{Operational and allocation outcomes}
\label{sec:supp_operational_distributional}

For each Pareto solution, expected evacuation demand, average transfer distance, hardening coverage, and average normalized hardening depth are
computed as
\begin{align}
\mathrm{EvacDemand}
&=
\sum_{s\in\mathcal S}P_s
\sum_{(j,k)\in\mathcal A_s}q_{sjk},
\\
\mathrm{AvgDistance}
&=
\frac{
\sum_{s\in\mathcal S}P_s
\sum_{(j,k)\in\mathcal A_s}d_{jk}q_{sjk}
}{
\sum_{s\in\mathcal S}P_s
\sum_{(j,k)\in\mathcal A_s}q_{sjk}
},
\\
\mathrm{HCov}
&=
\frac{
|\{j\in J^{\mathrm{send}}:y_j>0\}|
}{
|J^{\mathrm{send}}|
},
\\
\mathrm{HDepth}
&=
\frac{1}{|J^{\mathrm{send}}|}
\sum_{j\in J^{\mathrm{send}}}
\frac{y_j}{H_j^{\max}}.
\end{align}

For a facility subset $\mathcal J'\subseteq J^{\mathrm{send}}$, the share of
the hardening budget allocated to that group is
\[
\mathrm{BudgetShare}(\mathcal J')
=
\frac{
\sum_{j\in\mathcal J'}C_j^Hy_j
}{
\sum_{j\in J^{\mathrm{send}}}C_j^Hy_j
}.
\]

Table~\ref{tbl:supp_operational_metrics} shows that larger budgets reduce
both expected and extreme evacuation demand while expanding hardening
coverage and depth. 
\begin{table}[H]
\centering
\small
\caption{Operational outcomes for representative Pareto policies. Hardening
metrics are reported as percentages of exposed facilities and maximum
feasible protection levels.}
\label{tbl:supp_operational_metrics}
\resizebox{\textwidth}{!}{
\begin{tabular}{llrrrrrrrr}
\toprule
Budget & Policy
& \multicolumn{3}{c}{Evacuation demand}
& \multicolumn{3}{c}{Distance (miles)}
& Coverage
& Avg.\ depth \\
\cmidrule(lr){3-5}
\cmidrule(lr){6-8}
& & Expected & Median & Maximum
& Expected & Median & Maximum
& (\%) & (\%) \\
\midrule

\multirow{3}{*}{\$50M}
& Impact min. & 344.48 & 0 & 6720 & 3.68 & 0    & 6.31 & 10.66 & 4.40 \\
& Knee        & 342.37 & 0 & 6834 & 3.44 & 0    & 4.09 &  9.02 & 4.00 \\
& Cost min.   & 350.85 & 8 & 6834 & 3.36 & 0.42 & 4.09 &  9.84 & 3.87 \\

\midrule

\multirow{3}{*}{\$300M}
& Impact min. & 140.78 & 0 & 2832 & 2.29 & 0 & 2.49 & 42.62 & 37.79 \\
& Knee        & 117.78 & 0 & 2781 & 2.24 & 0 & 2.39 & 43.44 & 34.61 \\
& Cost min.   &  99.81 & 0 & 2275 & 3.13 & 0 & 3.35 & 48.36 & 40.22 \\

\midrule

\multirow{3}{*}{\$600M}
& Impact min. & 12.63 & 0 & 197 & 1.88 & 0 & 2.15 & 95.90 & 93.76 \\
& Knee        &  5.19 & 0 & 140 & 1.52 & 0 & 1.52 & 93.44 & 90.79 \\
& Cost min.   &  1.93 & 0 &  52 & 1.53 & 0 & 1.53 & 95.90 & 95.20 \\

\bottomrule
\end{tabular}
}
\end{table}

Table \ref{tbl:supp_facility_type_investment} shows hardening investment patterns by facility type at representative
solutions. 

\begin{table}[!t]
\centering
\small
\setlength{\tabcolsep}{4pt}
\caption{Hardening investment patterns by facility type at representative
solutions. The exposed set contains 23 hospitals and 99 nursing homes.}
\label{tbl:supp_facility_type_investment}
\begin{tabular}{lllrrr}
\toprule
Budget & Policy & Type & Coverage (\%) & Avg.\ depth (\%) & Budget share (\%) \\
\midrule

\multirow{6}{*}{\$50M}
& Impact min. & Hospital     & 17.39 & 6.58 & 41.12 \\
&             & Nursing home & 9.09 & 3.89 & 58.88 \\
\cmidrule(lr){2-6}
& Knee        & Hospital     & 17.39 & 9.60 & 54.81 \\
&             & Nursing home & 7.07 & 2.70 & 45.19 \\
\cmidrule(lr){2-6}
& Cost min.   & Hospital     & 21.74 & 9.58 & 58.61 \\
&             & Nursing home & 7.07 & 2.55 & 41.39 \\

\midrule

\multirow{6}{*}{\$300M}
& Impact min. & Hospital     & 47.83 & 44.64 & 31.51 \\
&             & Nursing home & 41.41 & 36.20 & 68.49 \\
\cmidrule(lr){2-6}
& Knee        & Hospital     & 78.26 & 75.65 & 41.85 \\
&             & Nursing home & 35.35 & 25.08 & 58.15 \\
\cmidrule(lr){2-6}
& Cost min.   & Hospital     & 100.00 & 100.00 & 45.34 \\
&             & Nursing home & 36.36 & 26.33 & 54.66 \\

\midrule

\multirow{6}{*}{\$600M}
& Impact min. & Hospital     & 95.65 & 94.78 & 22.09 \\
&             & Nursing home & 95.96 & 93.52 & 77.91 \\
\cmidrule(lr){2-6}
& Knee        & Hospital     & 100.00 & 100.00 & 22.67 \\
&             & Nursing home & 91.92 & 88.65 & 77.33 \\
\cmidrule(lr){2-6}
& Cost min.   & Hospital     & 100.00 & 100.00 & 22.67 \\
&             & Nursing home & 94.95 & 94.08 & 77.33 \\

\bottomrule
\end{tabular}
\end{table}

Figure~\ref{fig:supp_investment_patterns} reports hardening coverage and depth
by vulnerability group across the full Pareto frontiers. The strongest reallocation occurs at the \$300M budget, where tightening the
service-disruption constraint increases protection of facilities serving
higher-vulnerability communities. At \$600M, differences across vulnerability
groups narrow as most exposed facilities approach full protection.

\begin{figure}[!t]
\centering
\setlength{\tabcolsep}{4pt}
\begin{tabular}{cccc}
& \textbf{\$50M} & \textbf{\$300M} & \textbf{\$600M} \\

\rotatebox{90}{\begin{tabular}{c}
Hardening\\coverage (\%)
\end{tabular}}
&
\includegraphics[width=0.30\textwidth]{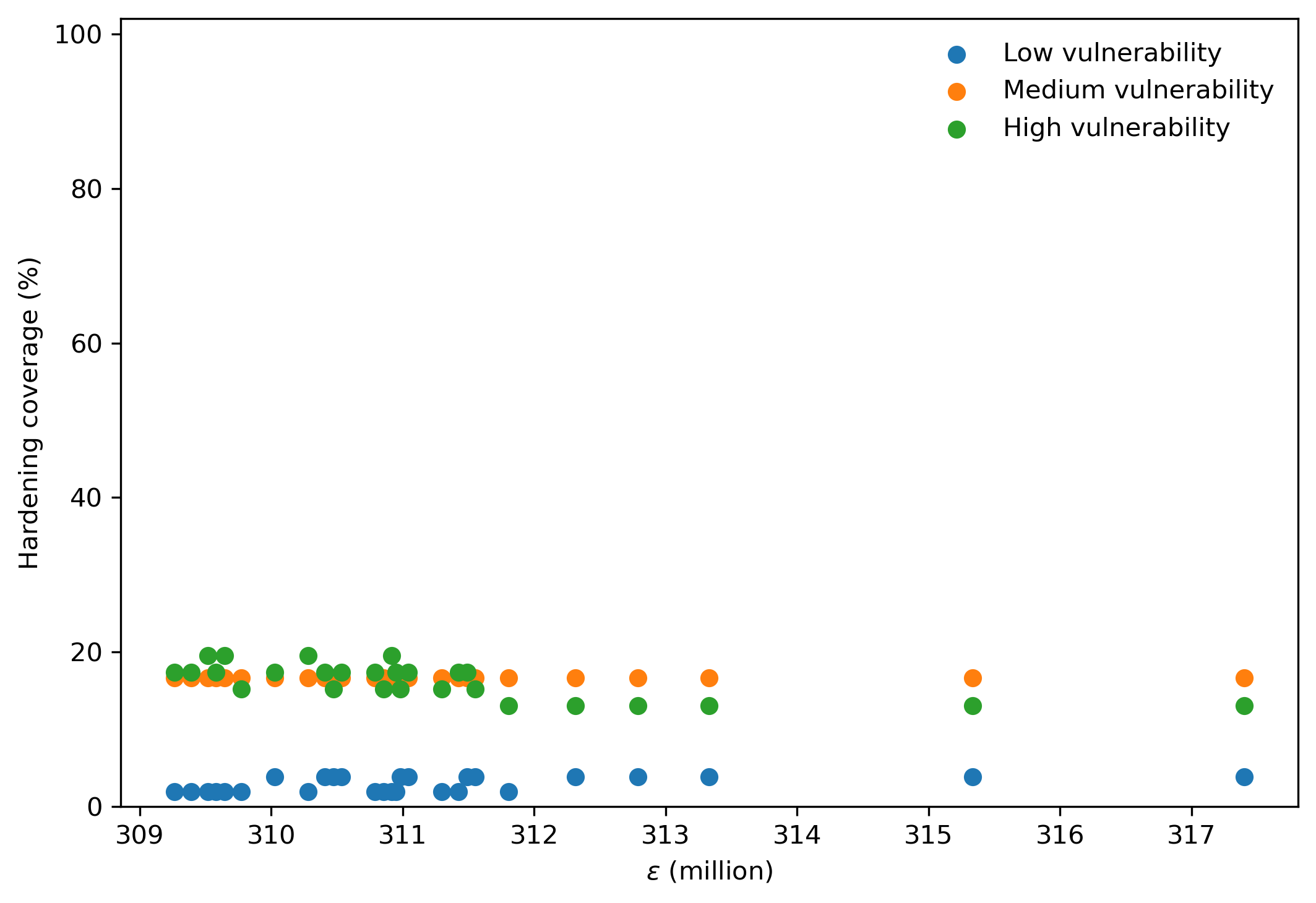}
&
\includegraphics[width=0.30\textwidth]{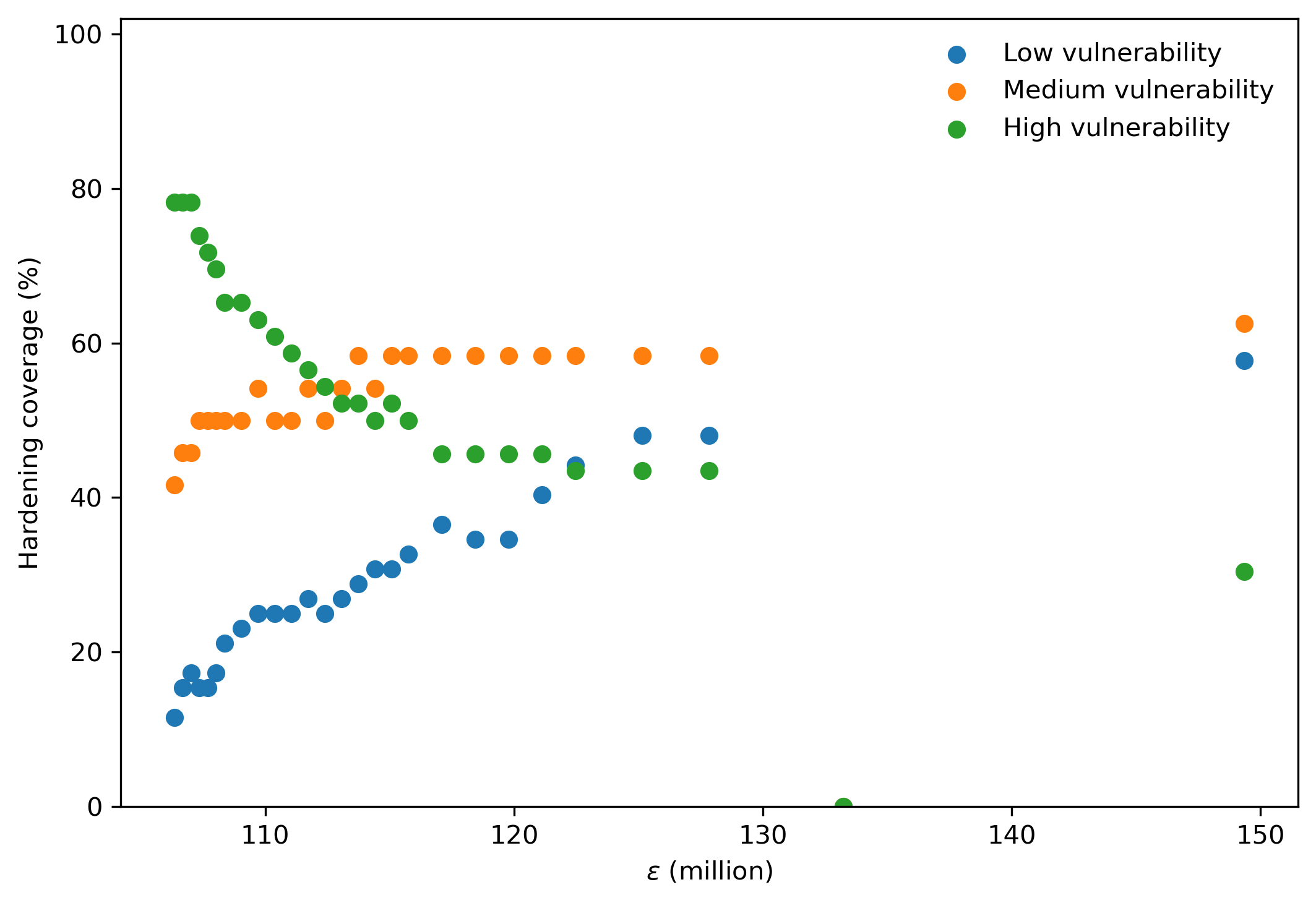}
&
\includegraphics[width=0.30\textwidth]{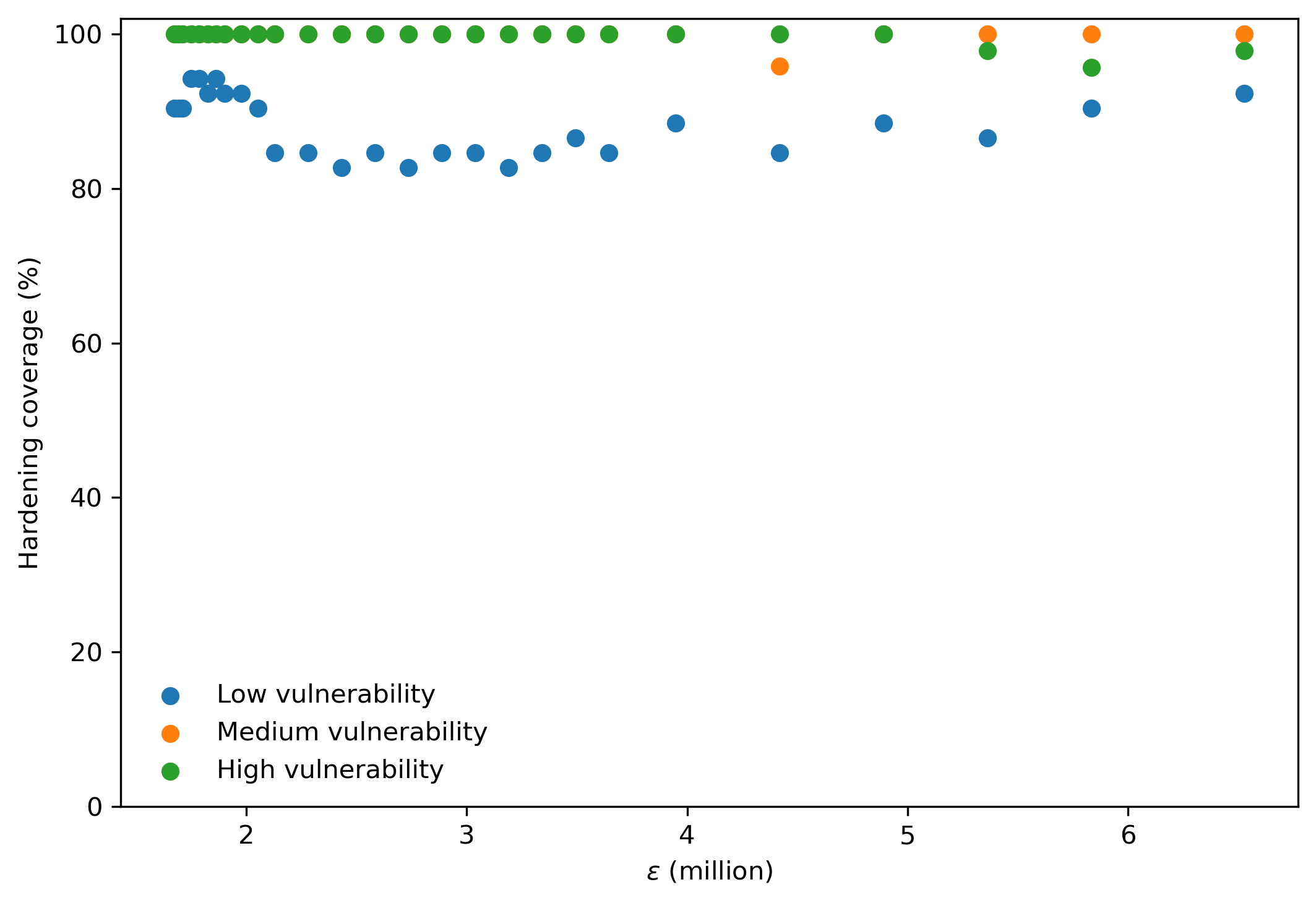}
\\[5pt]

\rotatebox{90}{\begin{tabular}{c}
Average normalized\\hardening depth (\%)
\end{tabular}}
&
\includegraphics[width=0.30\textwidth]{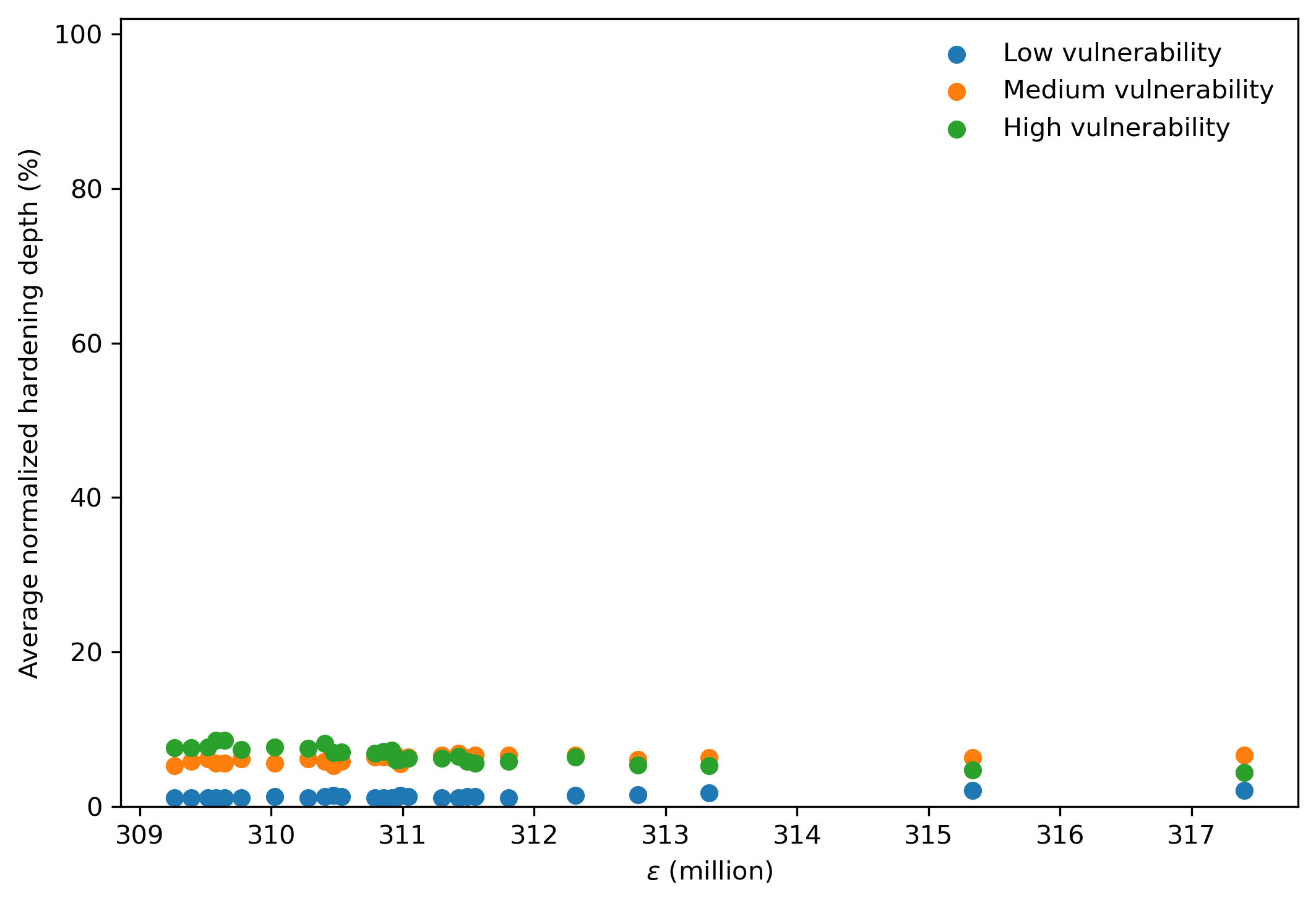}
&
\includegraphics[width=0.30\textwidth]{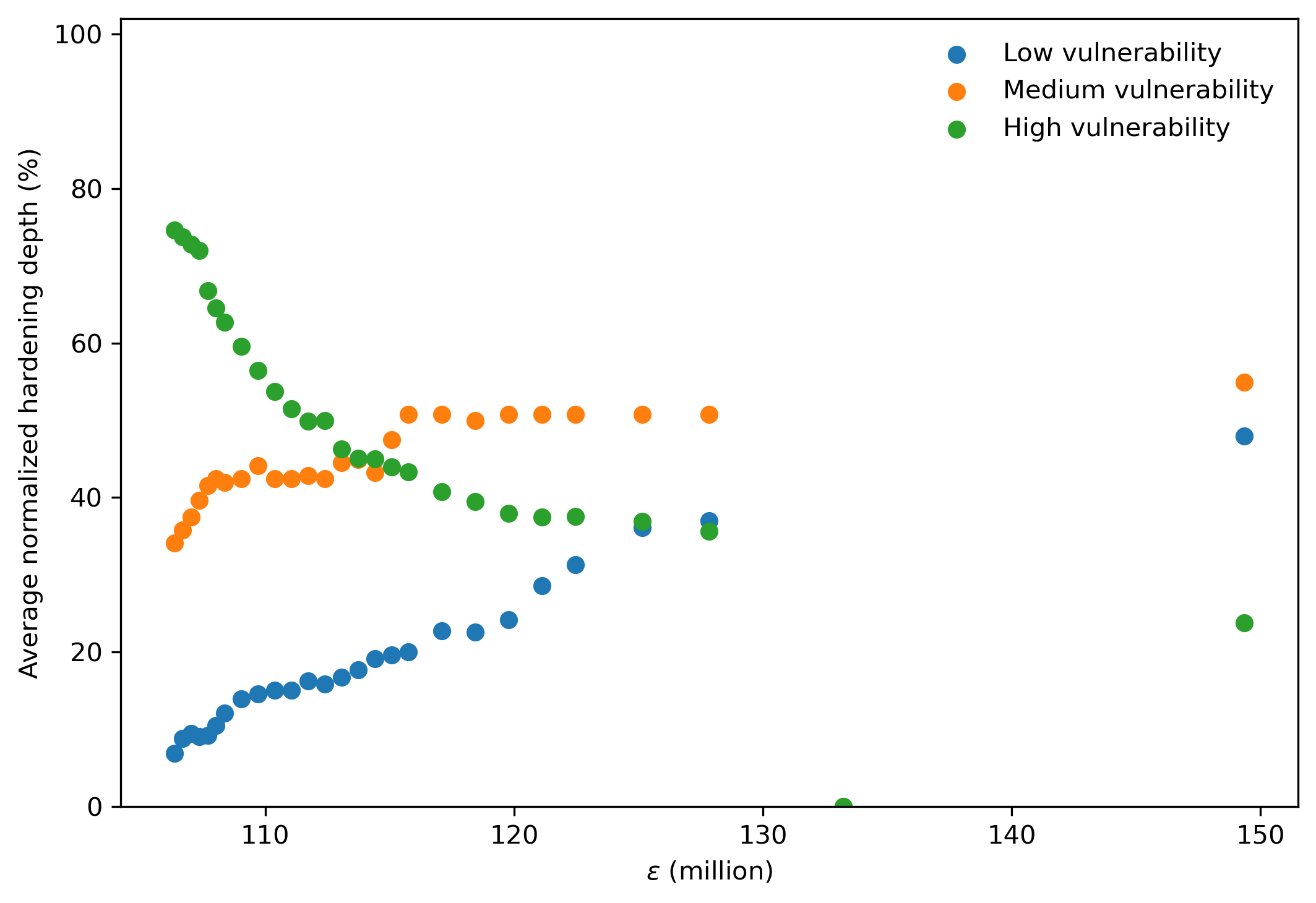}
&
\includegraphics[width=0.30\textwidth]{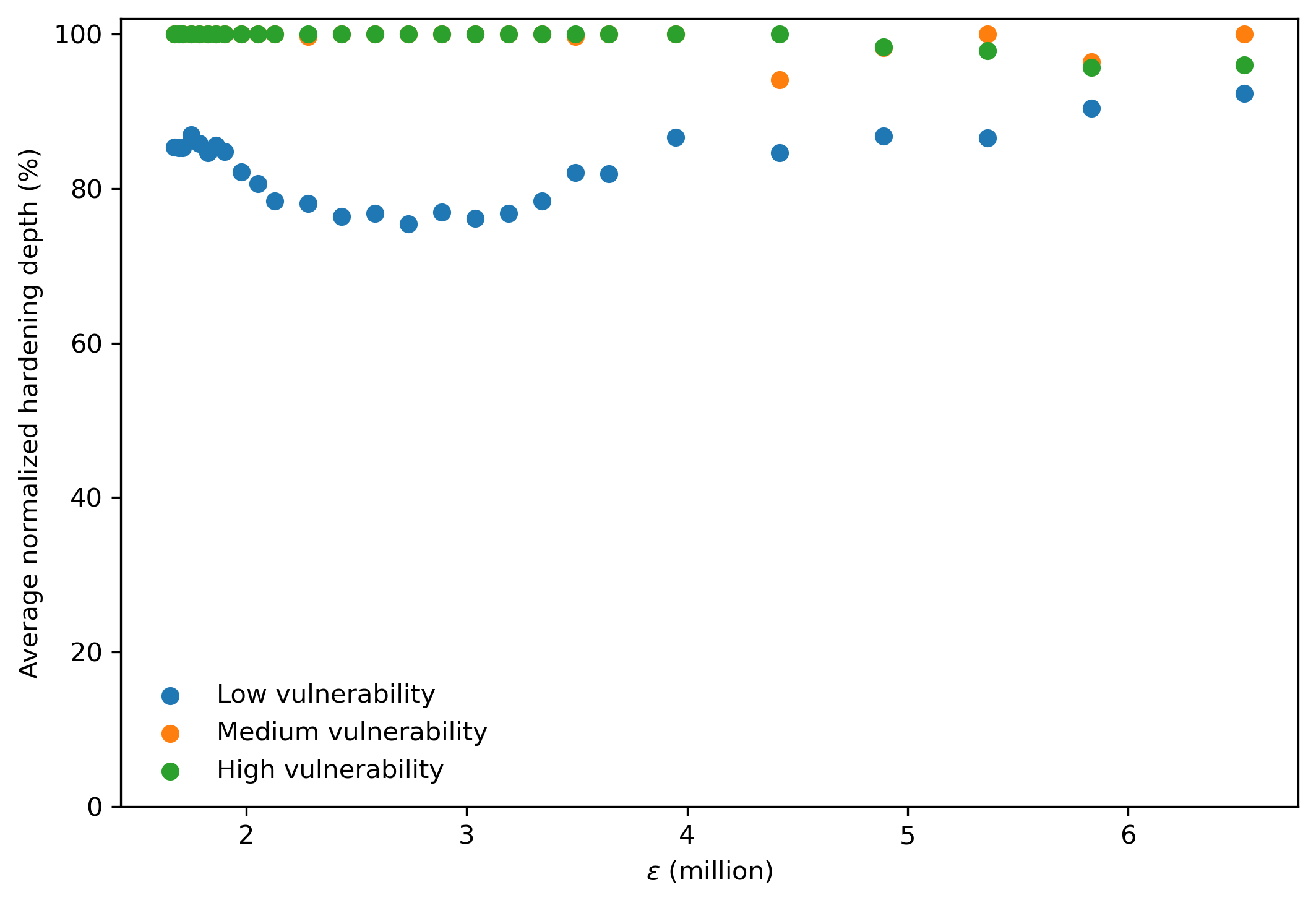}
\end{tabular}

\caption{Hardening coverage and normalized protection depth for facilities
serving low-, medium-, and high-vulnerability communities along the Pareto
frontiers.}
\label{fig:supp_investment_patterns}
\end{figure}

\FloatBarrier
\subsection{Detailed computational performance}
\label{sec:supp_algorithmic_performance}

Table~\ref{tbl:algorithm_performance} summarizes total runtime and per-point statistics over the 25 interior $\varepsilon$-points.

\begin{table}[!t]
\centering
\small
\setlength{\tabcolsep}{4pt}
\renewcommand{\arraystretch}{1.1}
\caption{Algorithmic performance for Pareto frontier generation}
\label{tbl:algorithm_performance}
\begin{tabular}{cc|c|ccc}
\toprule
\textbf{Budget} 
& \textbf{Method}
& \begin{tabular}{c}
Total Time \\
(min)
\end{tabular}
& \begin{tabular}{c}
Mean / pt \\
(sec)
\end{tabular}
& \begin{tabular}{c}
Median / pt \\
(sec)
\end{tabular}
& \begin{tabular}{c}
Max / pt \\
(sec)
\end{tabular} \\
\midrule

\multicolumn{6}{c}{\hspace{3em} $\mathcal{S}^{\text{main}}$ ($|\mathcal{S}|=27$)} \\
\midrule
\$50M  & Extensive & 235 & 564 & 563 & 614 \\
       & Benders     &  26  &  63  &  54  &  205  \\
       & Lagrangian  &  11  &  27  &  24  &  38  \\
\cmidrule(lr){1-6}
\$300M & Extensive & 288 (OOM at 2/25) & 752 & 692 & 1{,}714 \\
       & Benders     &  55  &  132  &  94  &  527  \\
       & Lagrangian  &  12  &  28  &  25  &  58  \\
\cmidrule(lr){1-6}
\$600M & Extensive   &  207 (OOM at 2/25)  &  540  &  495  &  993  \\
       & Benders     &  28  &  66  &  46  &  356  \\
       & Lagrangian  &  15  &  35  &  33  &  43  \\

\midrule
\multicolumn{6}{c}{\hspace{3em} $\mathcal{S}^{\text{perc}}$ ($|\mathcal{S}|=81$)} \\
\midrule
\$50M  & Extensive   &  -- (OOM at 25/25)  &  --  &  --  &  --  \\
       & Benders     &  83  &  200  &  159  &  822  \\
       & Lagrangian  &  28  &  67  &  67  &  90  \\
\cmidrule(lr){1-6}
\$300M & Extensive  &  -- (OOM at 25/25) &  --  &  --  &  --  \\
       & Benders     &  142  &  340  &  316  &  871  \\
       & Lagrangian  &  25  &  61  &  57  &  77  \\
\cmidrule(lr){1-6}
\$600M & Extensive   &  -- (OOM at 25/25) &  --  &  --  &  --  \\
       & Benders     &  107  &  257  &  207  &  1023  \\
       & Lagrangian  &  33  &  80  &  83  &  96  \\

\bottomrule
\end{tabular}

\vspace{2mm}

\begin{minipage}{\textwidth}
\footnotesize
Notes: Reported times include the 25 interior $\varepsilon$-points and exclude the 
lexicographic endpoint solves. Total time includes both model-building and solution times.
OOM denotes \emph{Out of Memory}, indicating solver termination due to memory exhaustion.
When OOM occurs, reported statistics are computed over successfully completed
$\varepsilon$-solves only.
\end{minipage}
\end{table}

\FloatBarrier